\newtheorem{thm}{Theorem}[section]
\newtheorem{lemma}[thm]{Lemma}
\newtheorem{prop}[thm]{Proposition}
\theoremstyle{definition}
\theoremstyle{remark}
\numberwithin{equation}{section}
\newcommand{\mmod}[1]{\,\,\mathrm{mod}\,\,#1}
\def\alp{{\alpha}} 
\def\bet{{\beta}}  
\def\del{{\delta}}
\def\eps{\varepsilon}
\def\le{\leqslant} \def\ge{\geqslant}
\def\d{{\,{\rm d}}}
\def \bC {\mathbb C}
\def \bF {\mathbb F}
\def \bN {\mathbb N}
\def \bQ {\mathbb Q}
\def \bR {\mathbb R}
\def \bZ {\mathbb Z}
\def \bx {\mathbf x}
\def \fc {\mathfrak c}
\def \fm {\mathfrak m}
\def \fp {\mathfrak p}
\def \fC {\mathfrak C}
\def \fG {\mathfrak G}
\def \fM {\mathfrak M}
\def \cB {\mathcal B}
\def \cC {\mathcal C}
\def \cE {\mathcal E}
\def \cI {\mathcal I}
\def \cJ {\mathcal J}
\def \cS {\mathcal S}
\def \deg {\mathrm{deg}}
\begin{document}
\title[Averaging on thin sets of diagonal forms]{Averaging on thin sets of diagonal forms}
\author[Sam Chow]{Sam Chow}
\address{School of Mathematics, University of Bristol, University Walk, Clifton, Bristol BS8 1TW, United Kingdom}
\email{Sam.Chow@bristol.ac.uk}
\subjclass[2010]{11D72, 11E76, 11P55}
\keywords{Hardy-Littlewood method, Diophantine equations}
\thanks{}
\date{}
\begin{abstract} 
We investigate one-dimensional families of diagonal forms, considering the evolution of the asymptotic formula and error term. We then discuss properties of the average asymptotic formula obtained. The subsequent second moment analysis precipitates an effective means of computing $p$-adic densities of zeros for large primes $p$.
\end{abstract}
\maketitle

\section{Introduction}
\label{intro}

The study of families of integral forms has long been espoused by arithmetic geometers in order to understand fundamental differences in the prevalences of rational points of forms. The idea of counting zeros on average, however, seems to be recent, and has been quite successful in extracting information about typical behaviour in situations where the anomalies have not been classified \cite{Bre2004, BruD2012eq, Mad2010}.  Much of the inspiration for this paper is owed to recent work of Br\"udern and Dietmann \cite{BruD2012eq}. By averaging over all coefficients in a range far exceeding the box length $B$, they demonstrated the Hasse principle for almost all degree $k$ diagonal forms in just $3k+2$ variables. Our goal is to see what can be achieved using as little averaging as possible. 

Let $k \ge 3$ be an integer. Let $h_1,\ldots, h_s$ be polynomials of degree $d>1$ with integer coefficients, being pairwise relatively prime in $\bQ[t]$ and having no integer roots. In the case that $k$ is even, we assume that the leading coefficients of the $h_i$ do not all have the same sign. For positive integers $t \le B^\delta$, where $\delta$ is a small positive constant, we first estimate the number $N(B,t)$ of integral solutions $\mathbf{x} \in [-B,B]^s$ to
\begin{equation} \label{start} h_1(t)x_1^k + \ldots + h_s(t) x_s^k=0. \end{equation}

Suppose that either $s \ge 2^k + 1$ and $d \delta < 2^{1-k}$; or $s \ge 2k^2-2$ and $d \delta < 1/3$. Define
\begin{equation*}
\fG(t) = \sum_{q=1}^\infty q^{-s} \sum_{\substack{a = 1 \\ (a,q)=1}}^q  \prod_{i=1}^s  \sum_{m =1}^q e(ah_i(t) m^k/q). 
\end{equation*}

\begin{thm} \label{IndividualAsymptotic} There exist positive constants $C$ and $\eps$ such that if $B^\eps \ll t \le B^\delta$ then
\begin{equation} \label{IndividualAsymptoticEq}  N(B,t)t^d = C \fG(t)B^{s-k} + O(B^{s-k-\eps}). \end{equation}
\end{thm}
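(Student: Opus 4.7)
The plan is to apply the Hardy-Littlewood circle method with the generating functions
$$F_i(\alpha) = \sum_{|x| \le B} e(\alpha h_i(t) x^k), \quad 1 \le i \le s,$$
so that $N(B,t) = \int_0^1 \prod_{i=1}^s F_i(\alpha)\, d\alpha$. Partition $[0,1]$ into major arcs $\fM$ centred on reduced fractions $a/q$ with $q \le P$ and radius $P B^{-k}$, and minor arcs $\fm = [0,1] \setminus \fM$, where $P = B^{\eta}$ for a small $\eta > 0$ to be chosen. Note the scaling identity $F_i(\alpha) = F(h_i(t)\alpha)$, where $F(\bet) = \sum_{|x|\le B} e(\bet x^k)$, which will allow me to import standard Weyl-type and Vinogradov-type estimates after controlling the distortion caused by $h_i(t)$.

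For the minor arcs, the idea is that when $\alp \in \fm$, the dilate $\bet = h_i(t)\alp$ (modulo one) still has no rational approximation by fractions with denominator below a power of $B$, because the inflation factor $h_i(t) \ll t^d \le B^{d\del}$ is small relative to $P$. Thus Weyl's inequality (in the first case) yields $|F_i(\alp)| \ll B^{1 - 2^{1-k} + \eta'}$ on $\fm$; in the second case, Vinogradov's mean value theorem plays the analogous role. Combining this pointwise bound with Hua's lemma (respectively Vinogradov's mean value bound) through H\"older's inequality produces
$$\int_\fm \prod_{i=1}^s F_i(\alp)\, d\alp \ll B^{s - k - \rho}$$
for some fixed $\rho > 0$. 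The hypothesis $d\del < 2^{1-k}$ (resp.~$d\del < 1/3$) is precisely what guarantees that after multiplication by $t^d \le B^{d\del}$ the minor arc contribution remains $O(B^{s-k-\eps})$ for a suitable $\eps > 0$.

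On the major arcs, the standard approximation $F_i(a/q + \bet) = q^{-1} S_i(q,a) v_i(\bet) + O(q(1 + B^k|\bet| h_i(t)))$, with
$$S_i(q,a) = \sum_{m=1}^{q} e(a h_i(t) m^k/q), \qquad v_i(\bet) = \int_{-B}^{B} e(\bet h_i(t) \xi^k)\, d\xi,$$
followed by multiplication and integration, gives a truncated singular series--singular integral product. Completing the series and the integral in the usual way, with errors controlled by $P$-dependent tails, yields
$$\int_\fM \prod_{i=1}^s F_i(\alp)\, d\alp = \fG(t) J(t) + O(B^{s-k-\rho'}),$$
where $J(t)$ is the full singular integral.

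To finish, I analyse $J(t)$ by rescaling $\xi = B\eta$ and $\gam = \bet B^k t^d$, obtaining
$$J(t) = \frac{B^{s-k}}{t^d} \int_{\bR} \prod_{i=1}^s \biggl( \int_{-1}^{1} e(\gam (h_i(t)/t^d) \eta^k)\, d\eta \biggr) d\gam.$$
Since $h_i(t)/t^d$ equals the leading coefficient of $h_i$ up to an error of $O(1/t)$, a quantitative dominated convergence argument (made absolutely convergent by the sign hypothesis when $k$ is even) shows that the $\gam$-integral equals a fixed positive constant $C$ plus $O(1/t)$. The assumption $t \gg B^\eps$ then converts this error into $O(B^{-\eps})$, and multiplying through by $t^d$ yields the claimed formula. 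The principal obstacle is the uniformity in $t$: every error term---in the Weyl bound, in the major arc approximation, in the completion of the singular series, and in the convergence of the singular integral---must be shown to hold uniformly for $B^\eps \ll t \le B^\del$, and the smallness conditions on $d\del$ are dictated precisely by this balancing.
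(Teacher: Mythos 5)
Your proposal follows the same overall circle-method strategy as the paper---dilated generating functions $f(\alpha h_i(t))$, major/minor arc dissection, Weyl (or Wooley) on the minors, Vaughan-type approximation and completion of the singular series on the majors---but your treatment of the singular integral is genuinely different, and simpler, than what the paper does. The paper's Lemma \ref{SingularIntegral} writes $I(t)$ via the substitution $\zeta_i=\gamma_i^k$ as an $(s+1)$-fold integral, justifies an interchange of integration by showing a certain section function is of bounded variation, applies Fourier inversion to express $k^s a_s^{1/k}I(t)$ as a geometric volume, and then rescales $\eta_i = h_i(t)\zeta_i/(c_it^d)$ to extract the main term $C$ with error $O(1/t)$. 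You instead substitute $\gamma=\beta B^k t^d$ directly to get $I(t)t^d=\int_{\bR}\prod_i v\bigl(\gamma\, h_i(t)/t^d\bigr)\,\d\gamma$ and compare with $C=\int_{\bR}\prod_i v(\gamma c_i)\,\d\gamma$, which avoids the Fourier-inversion machinery entirely. This works, but the phrase ``quantitative dominated convergence'' glosses over the only nontrivial point: dominated convergence gives $o(1)$, not $O(1/t)$. To get the rate one should combine the Lipschitz estimate $|v(\alpha)-v(\beta)|\ll|\alpha-\beta|$ with the decay $v(\beta)\ll|\beta|^{-1/k}$, noting $h_i(t)/t^d=c_i+O(1/t)\asymp 1$, so that each difference of products is $\ll\min(|\gamma|/t,\,1)\cdot\min(1,|\gamma|^{-(s-1)/k})$; integrating over $\gamma$ yields $O(1/t)$ since $(s-1)/k>2$ under either hypothesis on $s$. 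Two smaller imprecisions worth flagging: (i) your major-arc height $P=B^{\eta}$ is $t$-independent, whereas the paper takes the cutoff $Q\asymp t^d B^{k/2^{k-1}}$ (resp.\ $t^d B$) so that the dilation by $h_i(t)\ll t^d$ never pushes a well-approximable point off the majors---with a fixed $P$ you lose a $d\delta$ in the Weyl exponent, which is still affordable here but should be tracked; and (ii) the sign hypothesis for $k$ even is what makes $C>0$ (the limiting region has nonempty interior), not what makes the integral converge---convergence already follows from $s>k$.
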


Theorem \ref{IndividualAsymptotic} provides an asymptotic formula for $N(B,t)$, since we know from \cite[Chapter 8]{Dav2005} that $\fG(t)$ is a positive real number for any integer $t$. Motivated by the estimate \eqref{IndividualAsymptoticEq}, we study the asymptotic behaviour of the corresponding weighted average. Let $T$ be a positive integer satisfying $B^\varepsilon \ll T \le B^\delta$.

\begin{thm} \label{average} There exist positive constants $K$ and $\eps$, independent of $T$, such that
\begin{equation*} T^{-1} \sum_{t \le T} N(B,t)t^d B^{k-s} - K \ll B^{-\eps}. \end{equation*}
\end{thm}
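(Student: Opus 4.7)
The plan is to combine the pointwise asymptotic of Theorem \ref{IndividualAsymptotic} with an analysis of the arithmetic mean of the singular series $\fG(t)$. First I would split the $t$-sum at a threshold $T_0 \asymp B^{\eps_0}$, chosen just above the implicit constant in the hypothesis $B^\eps \ll t$ of Theorem \ref{IndividualAsymptotic}, and taking $\eps_0$ small relative to the exponent $\eps_1$ implicit in $T \gg B^{\eps_1}$. On the range $T_0 < t \le T$, Theorem \ref{IndividualAsymptotic} gives $N(B,t) t^d B^{k-s} = C\fG(t) + O(B^{-\eps'})$ termwise, so summing and normalising yields
\begin{equation*}
T^{-1}\sum_{T_0 < t \le T} N(B,t) t^d B^{k-s} = C\, T^{-1} \sum_{T_0 < t \le T} \fG(t) + O(B^{-\eps'}).
\end{equation*}
For $t \le T_0$ I would invoke the uniform minor-arc estimate underlying Theorem \ref{IndividualAsymptotic}, which delivers $N(B,t) \ll B^{s-k+\eps''}$ uniformly in $t \le B^\delta$; the contribution to the average is then at most $T^{-1} T_0^{d+1} B^{\eps''} \ll B^{\eps_0(d+1)+\eps''-\eps_1}$, negligible provided $\eps_0,\eps''$ are chosen small enough in terms of $\eps_1$.

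The second step is to show that $T^{-1}\sum_{t\le T} \fG(t)$ stabilises to a limit $K_0$ with a power saving. Writing $\fG(t) = \sum_{q \ge 1} q^{-s} A(q,t)$, where $A(q,t) = \sum_{(a,q)=1}\prod_i \sum_{m=1}^q e(ah_i(t)m^k/q)$, the standard Gauss sum estimate $|A(q,t)| \ll q^{1+s(1-1/k)+\eps}$ (uniform in $t$) controls the tail: under the hypothesis $s \ge 2^k+1$ we have $s/k > 2$, so $\sum_{q > Q} q^{-s}|A(q,t)| \ll Q^{2 - s/k + \eps}$. Crucially, for each fixed $q$ the quantity $A(q,t)$ depends on $t$ only through the residues $h_i(t) \bmod q$ and is therefore periodic in $t$ with period dividing $q$; hence
\begin{equation*}
T^{-1}\sum_{t \le T} A(q,t) = \bar{A}(q) + O\bigl(q^{2 + s(1-1/k) + \eps}/T\bigr),
\end{equation*}
where $\bar{A}(q)$ is the mean of $A(q,\cdot)$ over one period. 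Assembling these with a truncation $q \le Q$ and optimising gives $T^{-1}\sum_{t\le T} \fG(t) = K_0 + O(T^{-\eta})$ for some $\eta > 0$, with $K_0 := \sum_{q\ge 1} q^{-s}\bar{A}(q)$; setting $K = CK_0$ and using $T \gg B^{\eps_1}$ converts $T^{-\eta}$ into a $B^{-\eps}$ saving.

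The main obstacle I anticipate is organising the budget of small exponents so that every error term genuinely beats $B^{-\eps}$: one must balance the truncation scale $T_0 = B^{\eps_0}$, the uniform bound exponent $\eps''$, the lower bound $B^{\eps_1}$ on $T$, and the saving $\eps'$ coming from Theorem \ref{IndividualAsymptotic}, so that the final exponent is strictly negative. The analytic inputs themselves—uniform Weyl bounds on the exponential sums in $t$, the periodicity of $A(q,t)$ in the polynomial argument, and absolute convergence of the singular series for $s > 2k$—are each standard in isolation; the care required lies in the bookkeeping.
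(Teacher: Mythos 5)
Your high-level strategy matches the paper's: average the pointwise asymptotic over $t$, and show that $T^{-1}\sum_{t\le T}\fG(t)$ converges with a power saving, using periodicity of $\fG(t,q)$ in $t$ modulo $q$. The paper actually sidesteps your $T_0$-splitting by proving the stronger pointwise estimate $N(B,t)t^d B^{k-s} - C\fG(t) \ll B^{-\eps} + 1/t$ for \emph{all} $t \le B^\delta$ (the $1/t$ term handles small $t$ automatically), but your splitting is a perfectly valid workaround given the stated form of Theorem \ref{IndividualAsymptotic}. Your treatment of the mean of the singular series is in essence the paper's Lemma \ref{SSaverage}.

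There are, however, two genuine gaps. First, the uniform-in-$t$ bound $|A(q,t)| \ll q^{1+s(1-1/k)+\eps}$ you invoke is false as stated: the Gaussian-sum estimate gives $|S(q,ah_i(t))| \ll q^{1-1/k}(h_i(t),q)^{1/k}$, and the gcd factors $(h_i(t),q)$ can be as large as $q$. Without control on $\prod_i (h_i(t),q)$ the singular series need not even converge. The paper handles this via Lemma \ref{CoprimePoly}, which uses the pairwise coprimality of $h_1,\dots,h_s$ in $\bQ[t]$ to show $\prod_i(h_i(t),q) \ll q$ uniformly, yielding the weaker but still adequate bound $\fG(t,q) \ll q^{1+(1-s)/k}$; the rest of your convergence argument then goes through. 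Second, and more seriously, you never show that your limiting constant $K_0$ (equivalently $\fG$) is \emph{positive}, which is part of the assertion of the theorem. In the paper this is the bulk of the work in Lemma \ref{SSaverage}: $S(q)$ is shown to be multiplicative, $\fG$ is expressed as an Euler product $\prod_p T(p)$, each local factor is realised as a normalised count of solutions modulo $p^r$ of the congruence with $t$ as an extra variable, and positivity is deduced from Davenport--Lewis together with a Hensel-type lifting. Nothing in your sketch addresses this, and $\fG(t) > 0$ for each $t$ does not by itself imply that its Ces\`aro mean is positive.
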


Let $C$ and $K$ be as in Theorems \ref{IndividualAsymptotic} and \ref{average}. Put
\begin{equation} \label{Gdef}
 \fG = \sum_{q=1}^\infty q^{-1-s} \sum_{t=1}^q \sum_{\substack{a = 1 \\ (a,q)=1}}^q  \prod_{i=1}^s
\sum_{m =1}^q e(ah_i(t) m^k/q).  
\end{equation}
We will see that $K = C \fG$. We will show in Lemma \ref{SSaverage} that $T^{-1} \sum_{t \le T} \fG(t)$ converges to $\fG$ as $T \to \infty$. If the variance of the singular series were zero then $KB^{s-k}t^{-d}$ would be a good approximation to $N(B,t)$ for almost all $t$.

\begin{prop} \label{dream} Assume that 
\begin{equation} \label{nilseq}
\lim_{T \to \infty} T^{-1} \sum_{t \le T} (\fG(t)-\fG)^2 = 0. 
\end{equation}
Then there exists a positive-valued arithmetic function $\rho(t)$, decreasing to 0, such that the following statement holds for almost all positive integers $t$: if $B \ge t^{1/\delta}$ then
\begin{equation*} | N(B,t)t^dB^{k-s} - K | < \rho(t).\end{equation*}
\end{prop}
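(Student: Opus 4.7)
The plan is to derive the proposition from Theorem \ref{IndividualAsymptotic} and hypothesis \eqref{nilseq} by a routine Chebyshev/density-one argument. Since $K = C\fG$, the triangle inequality gives
\[
\bigl| N(B,t)t^d B^{k-s} - K \bigr|
\le \bigl| N(B,t)t^d B^{k-s} - C\fG(t) \bigr| + C\bigl|\fG(t)-\fG\bigr|.
\]
For $B \ge t^{1/\delta}$ in the range covered by Theorem \ref{IndividualAsymptotic}, the first summand is $O(B^{-\eps}) \le O(t^{-\eps/\delta})$, a deterministic null function of $t$; for values of $B$ so large that the theorem does not apply directly, the standard circle-method asymptotic for a diagonal equation with fixed coefficients $h_i(t)$ supplies the same leading term $C\fG(t)B^{s-k}/t^d$ with an error tending to $0$ in $B$ (and, for $B \ge t^{1/\delta}$, bounded uniformly by a function of $t$ that vanishes as $t \to \infty$). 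It therefore suffices to construct a positive, decreasing, null arithmetic function $\rho$ such that $C|\fG(t)-\fG| < \rho(t)$ for all $t$ outside a set of density zero.

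The input from \eqref{nilseq} is extracted via Chebyshev's inequality: for every $\eta>0$,
\[
\#\bigl\{t \le T : |\fG(t)-\fG| \ge \eta\bigr\} \le \eta^{-2} \sum_{t\le T}(\fG(t)-\fG)^2 = o(T),
\]
so each level set $\{t : |\fG(t)-\fG| \ge \eta\}$ has natural density zero. I would then pick a sequence $\eta_n \downarrow 0$ and a sufficiently rapidly growing sequence of positive integers $T_1 < T_2 < \cdots \to \infty$ (say with $T_{n+1} \ge 4^n T_n$) such that, for each $n$,
\[
\#\bigl\{t \le T : |\fG(t)-\fG| \ge \eta_n/(2C)\bigr\} \le 2^{-n} T \qquad (T \ge T_n),
\]
and define $\rho_0(t) = \eta_n$ for $t \in [T_n,T_{n+1})$. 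Then $\rho_0$ is a non-increasing arithmetic function with $\rho_0(t) \to 0$, and a dyadic summation over $n$ (exploiting the geometric growth of $T_n$) shows that the exceptional set
\[
\fB = \bigl\{t : |\fG(t)-\fG| \ge \rho_0(t)/(2C)\bigr\}
\]
has upper density zero.

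Finally, I would set $\rho(t) = \rho_0(t) + A t^{-\eps/\delta}$ for a sufficiently large constant $A$ (at least twice the implicit constant coming from the error term above); this is positive, strictly decreasing, and tends to $0$. Combining with the displayed triangle inequality yields, for every $t \notin \fB$ and every $B \ge t^{1/\delta}$,
\[
\bigl| N(B,t)t^d B^{k-s} - K \bigr| < \tfrac{1}{2}\rho_0(t) + \tfrac{1}{2} A t^{-\eps/\delta} = \tfrac{1}{2}\rho(t) < \rho(t),
\]
as required. The only real subtlety is the envelope construction: upgrading the Ces\`aro / $L^2$-style convergence in \eqref{nilseq} to a pointwise decreasing majorant of $|\fG(t)-\fG|$ on a density-one set. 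This is standard measure-theoretic bookkeeping and poses no analytic obstacle, so Theorems \ref{IndividualAsymptotic} and \ref{average} together with the hypothesis \eqref{nilseq} contain all the substantive content needed.
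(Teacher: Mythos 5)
Your proposal follows essentially the same route as the paper: a triangle inequality splitting off $C|\fG(t)-\fG|$, a Chebyshev-based construction of a decreasing null envelope for $|\fG(t)-\fG|$ valid off a density-zero set (the paper's Lemma \ref{attack}), and then absorbing the deterministic circle-method error into $\rho(t)$. The envelope constructions are cosmetically different -- the paper defines $\Upsilon(m)$ as the threshold past which $A_T := T^{-1}\sum_{t\le T}(\fG(t)-\fG)^2 \le m^{-3}$ and sets $r(t)=m^{-1}$ on $[\Upsilon(m),\Upsilon(m+1))$, which makes the density bound $T^{-1}\#\{t\le T: |\fG(t)-\fG|\ge r(t)\} \le r(T)^{-2}A_T \le 1/m$ immediate, whereas you go through level sets and a dyadic bookkeeping argument -- but both are correct and standard.

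The one place you handwave and the paper does real work is the error bound for large $B$. Theorem \ref{IndividualAsymptotic} only applies when $B^\eps \ll t \le B^\delta$, i.e.\ for a bounded range $t^{1/\delta}\le B \ll t^{1/\eps}$, and you correctly note this but then assert, without proof, that the ``standard circle-method asymptotic'' supplies $C\fG(t)B^{s-k}/t^d$ with a uniformly decaying-in-$t$ error for all larger $B$. This is not quite right as stated: the classical leading term is $I(t)\fG(t)B^{s-k}$, and one needs Lemma \ref{SingularIntegral} ($I(t)t^d = C + O(1/t)$) to replace $I(t)t^d$ by $C$, which introduces an error $O(1/t)$ that your $\rho(t)=\rho_0(t)+At^{-\eps/\delta}$ does not explicitly account for. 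The paper anticipates this by proving, in \S\ref{CM}, the stronger bound \eqref{alt}: $N(B,t)t^dB^{k-s} - C\fG(t) \ll B^{-\eps}+1/t$ whenever $t\le B^\delta$ (no lower bound on $t$), precisely so that it can be applied here for all $B\ge t^{1/\delta}$. You should cite or prove that uniform estimate rather than invoking the theorem, and include a $1/t$ (or $\min(1,\eps/\delta)$-power) term in $\rho(t)$; once that is done your argument is complete.
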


This would be a beautiful estimate, well behaved and independent of any singular series. Unfortunately the variance is positive almost surely.
\begin{thm} \label{PosVar} Let $h_1, \ldots, h_s$ be as defined in the preamble to Theorem \ref{IndividualAsymptotic}, and assume that they are irreducible in $\bQ[t]$. Then $T^{-1} \sum_{t \le T} (\fG(t)-\fG)^2$ converges to a positive real number.
\end{thm}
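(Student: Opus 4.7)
The plan is to express the limiting variance as a difference of Euler products over primes, reducing the problem to showing that the local singular series factor $\sig_p$ is non-constant for at least one prime $p$. Writing $A_q(t) = q^{-s}\sum_{(a,q)=1}^q \prod_i \sum_m e(ah_i(t)m^k/q)$, so that $\fG(t) = \sum_q A_q(t)$, a short calculation with the Chinese Remainder Theorem shows that $A_q(t)$ is multiplicative in $q$ and depends on $t$ only modulo $q$. This yields
\begin{equation*}
\fG(t) = \prod_p \sig_p(t), \qquad \fG = \prod_p \tau_p,
\end{equation*}
where $\sig_p(t) = \sum_{e \ge 0} A_{p^e}(t)$ and $\tau_p = \lim_{L \to \infty} p^{-L} \sum_{t=1}^{p^L} \sig_p(t)$ is the Haar-average of $\sig_p$ over $\bZ_p$.

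Expanding the square, the cross-term $A_{q_1}(t) A_{q_2}(t)$ depends on $t$ only modulo $\lcm(q_1,q_2)$, and its Ces\`aro average over $t$ converges to the natural periodic average. Factoring into prime powers gives
\begin{equation*}
\lim_{T \to \infty} T^{-1} \sum_{t \le T} \fG(t)^2 = \prod_p S_p, \qquad S_p := \lim_{L \to \infty} p^{-L} \sum_{t=1}^{p^L} \sig_p(t)^2,
\end{equation*}
and combining with Lemma \ref{SSaverage} for the cross terms yields
\begin{equation*}
\lim_{T \to \infty} T^{-1} \sum_{t \le T} (\fG(t) - \fG)^2 = \prod_p S_p - \prod_p \tau_p^2.
\end{equation*}
The absolute convergence required here, together with the positivity $\tau_p > 0$ and convergence of the Euler products, follows from uniform bounds of the form $\sig_p(t) = 1 + O(p^{-\eta})$ for some $\eta > 0$, available under the hypotheses on $s$. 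By Cauchy--Schwarz on $\bZ_p$, $S_p \ge \tau_p^2$ with equality iff $\sig_p$ is a.e.\ constant on $\bZ_p$, so the limiting variance is strictly positive as soon as $\sig_p$ is non-constant for at least one prime.

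The main obstacle is exhibiting such a prime. Here I use that each $h_i$ is irreducible of degree $d \ge 2$: by Chebotarev (or the more elementary observation that a non-constant integer polynomial admits prime divisors of arbitrary size), there is a positive density of primes $p$ at which $h_1$ has a simple root. Fix such a prime, chosen large enough to avoid a finite bad set (so $p > k$ and $p$ does not divide the leading coefficients, discriminants, or pairwise resultants of the $h_i$), and pick $t_0$ with $p \mid h_1(t_0)$ and $t_1$ with $p \nmid \prod_i h_i(t_1)$. Comparing local densities, at $t_0$ the coefficient of $x_1^k$ vanishes modulo $p$ and the mod-$p$ singular locus of the form becomes one-dimensional along the $x_1$-axis, whereas at $t_1$ the singular locus reduces to the origin. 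A direct Hensel-lifting computation---or equivalently an analysis of the $e = 1$ contribution $A_p(t)$ using Weil bounds on the inner Gauss sums---shows that this jump in the dimension of the singular locus is detected by $\sig_p$, so $\sig_p(t_0) \ne \sig_p(t_1)$ and the variance is strictly positive. The delicate point, which will require careful bookkeeping, is to verify that the contributions from higher-order $p$-adic corrections cannot conspire to equate the two local densities.
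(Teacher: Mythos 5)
Your reduction matches the paper's: both express the limiting variance as a difference of Euler products, use Cauchy--Schwarz to see that each local factor $S_p - \tau_p^2$ (in the paper's notation, $\chi_p - T(p)^2$) is nonnegative, and then try to exhibit a single prime $p$ at which the local density $\sig_p(t) = T_t(p)$ is genuinely non-constant. Your Frobenius/Chebotarev step for producing such a prime is also the paper's (Lemma \ref{IrredPoly}). So the scaffolding is fine.

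The genuine gap is the step you yourself flag: you do not actually show $\sig_p(t_0)\ne\sig_p(t_1)$. Your appeal to ``dimension of the mod-$p$ singular locus'' and ``Weil bounds on inner Gauss sums'' does not produce a contradiction by itself --- a priori the higher $p$-power contributions could cancel the mod-$p$ discrepancy, exactly the ``delicate point'' you acknowledge. The paper closes this by an explicit computation rather than a soft argument. Using a lifting argument (and exact divisibility $p\|h_s(v)$, which is what Lemma \ref{IrredPoly} provides --- your choice ``$p\mid h_1(t_0)$'' does not by itself rule out $p^2\mid h_1(t_0)$, though picking $t_0$ to lift a simple root off $p^2$ would fix this), one shows
\begin{equation*}
T_a(p) = \frac{A-1}{p^{s-1}-1}, \qquad T_b(p) = \frac{p(\tilde A -1)}{p^{s-1}-1},
\end{equation*}
where $A$ and $\tilde A$ are counts of $\bF_p$-solutions to $\sum_{i=1}^s a_i w_i^k \equiv 0$ and $\sum_{i=1}^{s-1} a_i w_i^k \equiv 0$ respectively. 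Then Chevalley--Warning gives $p\mid A$, so $T_a(p)\cdot(p^{s-1}-1)\equiv -1 \not\equiv 0 \pmod p$, while $T_b(p)\cdot(p^{s-1}-1)\equiv 0 \pmod p$; hence $T_a(p)\ne T_b(p)$ and one can take $\kap=(p^{s-1}-1)^{-1}$. This arithmetic distinction --- not a geometric one --- is what actually rules out the conspiracy of higher-order corrections, and it is the idea missing from your proposal.
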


The assumptions on $h_1,\ldots,h_s$ are mild. Indeed, almost all degree $d$ polynomials with integer coefficients are irreducible in $\bQ[t]$ and therefore have no integer root (see \cite{Kub2009} or \cite[p. 365]{PS1976}). Moreover, almost all $s$-tuples of such polynomials are pairwise relatively prime in $\bQ[t]$.

Theorem \ref{IndividualAsymptotic} ought to bear no surprises, but note that the coefficients are non-negligible in terms of the box size, and that the constants $C$ and $\eps$ do not depend on $t$. We omit the case $k=2$ for simplicity, as one would expect a Kloosterman \cite{Klo1927} or Heath-Brown \cite{HB1996} approach to produce the asymptotic formula \eqref{IndividualAsymptoticEq} with just $s \ge 4$ variables, subject to an assumption on $d \delta$. Theorem \ref{average} provides the estimate $KB^{s-k}$ for the weighted average $T^{-1} \sum_{t \le T} N(B,t)t^d$, with a uniform power saving in the error term. This is an elegant estimate, independent of any singular series and independent of $T$. Theorem \ref{PosVar} shows that the singular series $\fG(t)$ fluctuates not insignificantly about its mean value $\fG$. Our proof requires us to compare $p$-adic densities of zeros for large primes $p$, so we develop an effective method to compute these. To the author's knowledge, this is the first such method. Note that one could use the second part of \cite[Lemma 10.2]{Woo2013} to establish our results for any $s \ge 2k^2 - 2k$, but this would require a stronger assumption on $d \delta$.

We now elaborate briefly on previous related work, referring the reader to the introduction of \cite{BruD2012eq} for a more detailed account. Poonen and Voloch \cite{PV2003} considered the set of all integral forms of degree $k \ge 2$ in $s \ge 3$ variables, for a given $(k,s) \ne (2,3)$, demonstrating that a positive proportion of such forms are everywhere locally soluble. Browning and Dietmann \cite{BroD2009} proved an additive analogue of this result for $s \ge 4$, and also showed that almost all additive forms of degree $k \ge 2$ in $s=3$ variables admit no nontrivial integral solutions. It appears that average solution-counting was pioneered by Breyer in his doctoral thesis \cite{Bre2004}. Br\"udern and Dietmann subsequently strengthened his results to produce the aforementioned paper \cite{BruD2012eq}, and undertook parallel research for Diophantine inequalities in \cite{BruD2012ineq}.  Madlener applied similar techniques to two problems in his dissertation \cite{Mad2010}, in particular considering a thinner $s$-dimensional family where the coefficients are $\varphi_1(a_1), \ldots, \varphi_s(a_s)$ for fixed polynomials $\varphi_1, \ldots, \varphi_s$ of the same degree $d$, and showing that $O(k)$ variables suffice for almost all forms in this family to satisfy the Hasse principle. Note that in \cite{Bre2004}, \cite{BruD2012eq} and \cite{Mad2010} it is necessary for the coefficients to be much greater than $B$ in absolute value, in order to interchange the r\^oles of coefficients and variables.

For $3 \le k \le 6$ in Theorem \ref{IndividualAsymptotic}, our assumptions on $s$ and $d\delta$ present a trade-off between the number of variables needed and the allowed size of the coefficients $h_i(t)$. Assuming that $s \ge 2k^2-2$ we may, for any $k \ge 3$, allow coefficients of size $\asymp B^{1/3 - \eps}$. With $t^d = B^{1/3-\eps}$ and $m = \max |h_i(t)|$, Theorem \ref{IndividualAsymptotic} gives an asymptotic formula for the number of solutions $\mathbf{x}$ to equation \eqref{start} satisfying $0 < |\mathbf{x}| \ll m^{3+\eps}$ whenever $s \ge 2k^2-2$. If one is interested only in upper bounds for the height $|\bx|$ of a smallest nontrivial zero $\bx$ of a diagonal form in $s$ variables, with $m$ the maximum of the absolute values of the coefficients, then more can be said. Schmidt \cite{Sch1979} established the bound $m^{\eps}$ for $k$ odd and $s$ extremely large. Br\"udern \cite{Bru1996} achieved the bound $m^{8/3+\eps}$ for diagonal cubic forms in just nine variables. By averaging over all possible coefficients in a large range, Br\"udern and Dietmann \cite[Theorem 1.3]{BruD2012eq} obtained the impressive bound $m^{1/(s-2-k)}$ for almost all locally soluble diagonal forms in $s \ge 3k+2$ variables. Our averaging occurs over a much thinner set, and yields the expected asymptotic formula \eqref{IndividualAsymptoticEq}.

This paper is organised as follows. In \S \ref{CM} we establish Theorem \ref{IndividualAsymptotic} via the circle method. The singular integral is treated as in \cite[\S 3.1]{BruD2012eq}. Our results for $s \ge 2k^2-2$ and $d\delta < 1/3$ rely on recent work of Wooley \cite{Woo2013}, where efficient congruencing is used to show that $2k^2-2$ variables suffice to establish an asymptotic formula for diagonal forms under mild conditions on the coefficients. In \S \ref{AverageSS} we average the singular series by considering congruences with one extra variable, yielding Theorem \ref{average}. We conclude in \S \ref{SMA} by proving Proposition \ref{dream} and Theorem \ref{PosVar}. To show Theorem \ref{PosVar} we realise the second raw moment as an Euler product, leading us to compare $p$-adic densities. A simplest scenario is apparent, and a Chebotarev-type result (Lemma \ref{IrredPoly}) enables us to construct a `next simplest' scenario. The densities are understood via a lifting argument.

We adopt the convention that $\varepsilon$ denotes an arbitrarily small positive number, so that its value may differ between instances. The implicit constants in Vinogradov and Landau notation depend at most on $k,s,d,\delta,h_1,\ldots,h_s$ and $\varepsilon$. \emph{Almost all} means proportion 1, and similarly for \emph{almost surely}. Bold face will be used for vectors, for instance we shall abbreviate $(x_1,\ldots, x_s)$ to $\mathbf{x}$ and write \mbox{$|\mathbf{x}| = \max |x_i|$.} The letter $p$ is reserved for prime numbers. A \emph{$k$th power residue modulo $m$} is a $k$th power of some $x \in (\bZ / m \bZ)^\times$. Denote by $\bF_q$ the finite field of order $q$.

The author is very grateful towards Trevor Wooley for his enthusiastic supervision. Thanks also to Adam Morgan for his invaluable help with Lemmas \ref{CoprimePoly} and \ref{IrredPoly}.

\section{The circle method}
\label{CM}

We show, \emph{a fortiori}, that there exists $C>0$ such that
\begin{equation} \label{alt} 
 N(B,t)t^d B^{k-s} - C \fG(t) \ll B^{-\eps} + 1/t
\end{equation}
whenever $t \le B^\delta$ is a positive integer, since we will also need this to prove Theorem \ref{average} and Proposition \ref{dream}. Let $t \le B^\delta$ be a positive integer. Let 
\[ f(\gamma) = \sum_{|x| \le B} e(\gamma x^k),\]
and put $f_i(\gamma) = f(\gamma h_i(t))$ for $i=1,2,\ldots,s$. Fix an integer $\lambda$ such that $|h_i(t)| \le \lambda t^d$ uniformly in $i$ and $t$. 

We begin with the assumptions $s \ge 2^k + 1$ and $d \delta < 2^{1-k}$. If $a \ge 0$ and $q>0$ are integers, let $\fM(q,a)$ be the set of $\alpha \in (0,1)$ such that $|q \alpha - a| \le (2k \lambda B^{k-k/2^{k-1}})^{-1}$. Let $\fM$ be the (disjoint) union of the arcs $\fM(q,a)$ over $0 \le a \le q \le \lambda t^d B^{k/2^{k-1}}$ with $(a,q)=1$. Put $\fm = (0,1) \setminus \fM$. By orthogonality,
\begin{equation*} N(B,t) = \int_\fM f_1(\alpha) \cdots f_s(\alpha) \d\alpha + \int_\fm f_1(\alp) \cdots f_s(\alp) \d\alpha.\end{equation*}
The following is the crux of the minor arc analysis.

\begin{lemma} \label{crux}
Let $\alpha \in \fm$ and $c=h_i(t)$ for some $i$. Then
\begin{equation} \label{cruxbound} f(\alpha c) \ll B^{1-2^{1-k}+\varepsilon}. \end{equation}
\end{lemma}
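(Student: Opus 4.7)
The plan is to derive the bound on $f(\alpha c)$ by combining Dirichlet's approximation theorem (applied to $\alpha c$ rather than to $\alpha$) with a lifting argument that converts a rational approximation of $\alpha c$ into an approximation of $\alpha$, together with Weyl's inequality applied directly to $f(\alpha c)$.

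First, I would apply Dirichlet's theorem to the real number $\alpha c$ with parameter $Q = 2k\lambda B^{k-k/2^{k-1}}$, chosen so that $Q^{-1}$ coincides with the major arc gap threshold $(2k\lambda B^{k-k/2^{k-1}})^{-1}$. This yields coprime integers $a_0,q_0$ with $1 \le q_0 \le Q$ and $|q_0 \alpha c - a_0| \le Q^{-1}$.

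Second, I would establish $q_0 > B^{k/2^{k-1}}$ by contradiction. Assuming $q_0 \le B^{k/2^{k-1}}$, reduce $a_0/(q_0 c)$ to lowest terms, producing $a_1/q_1$ with $q_1 = q_0 c/\gcd(a_0,c)$, since $(a_0,q_0)=1$. Then $q_1 \le q_0 c \le \lambda t^d B^{k/2^{k-1}}$, meeting the denominator bound for $\fM$, while
\[|q_1 \alpha - a_1| = \frac{q_1}{q_0 c}\, |q_0 \alpha c - a_0| \le Q^{-1} = (2k\lambda B^{k-k/2^{k-1}})^{-1},\]
meeting the gap bound. These inequalities would place $\alpha$ in $\fM(q_1,a_1) \subset \fM$, contradicting $\alpha \in \fm$.

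Third, I would apply Weyl's inequality to $f(\alpha c)$ with the approximation $a_0/q_0$; the hypothesis $|\alpha c - a_0/q_0| \le q_0^{-2}$ follows from $q_0 \le Q$. This gives
\[f(\alpha c) \ll B^{1+\varepsilon}(q_0^{-1} + B^{-1} + q_0 B^{-k})^{2^{1-k}},\]
and the bounds $B^{k/2^{k-1}} < q_0 \le 2k\lambda B^{k-k/2^{k-1}}$ force each of the three terms in the parenthesis to be suitably small, delivering the claimed power saving.

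The main obstacle is the lifting step: the major arc parameters are calibrated precisely so that any approximation to $\alpha c$ with moderate denominator forces an approximation to $\alpha$ satisfying both the denominator bound and the gap bound defining $\fM$. Keeping track of the gcd $\gcd(a_0,c)$ when reducing the lifted fraction, and verifying that the choice $Q = 2k\lambda B^{k-k/2^{k-1}}$ transfers both conditions simultaneously, is the crux of the argument.
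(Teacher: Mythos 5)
Your lifting step is correct and matches the corresponding manoeuvre in the paper: approximating $\alpha c$ by $a_0/q_0$ and then dividing through by $c$ to obtain a rational approximation $a_1/q_1$ of $\alpha$ with $q_1 \le \lambda t^d B^{k/2^{k-1}}$ and gap $\le Q^{-1}$, so that $q_0 \le B^{k/2^{k-1}}$ would force $\alpha \in \fM$. The gap is in the final step, where you apply only Weyl's inequality.

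After your contradiction argument you know merely $q_0 > B^{k/2^{k-1}}$, and since $k/2^{k-1} < 1$ for $k \ge 3$, this is a weaker threshold than $q_0 > B$. In the range $B^{k/2^{k-1}} < q_0 \le 2k\lambda B^{k-k/2^{k-1}}$ the dominant term inside the Weyl parenthesis is $q_0^{-1}$, and you get
\begin{equation*}
f(\alpha c) \ll B^{1+\eps}\bigl(q_0^{-1}+B^{-1}+q_0 B^{-k}\bigr)^{2^{1-k}} \ll B^{1+\eps}\bigl(B^{-k/2^{k-1}}\bigr)^{2^{1-k}} = B^{1-k2^{2-2k}+\eps}.
\end{equation*}
The target is $B^{1-2^{1-k}+\eps}$, and $k2^{2-2k}<2^{1-k}$ is equivalent to $k<2^{k-1}$, which holds for every $k \ge 3$. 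So your bound is strictly weaker than what the lemma asserts, and the proof does not close. Weyl alone cannot cover the regime $B^{k/2^{k-1}} < q_0 \le B$.

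The paper closes this exact gap by bringing in the pointwise major-arc approximation \cite[Theorem 4.1 and Lemma 4.6]{Vau1997},
\begin{equation*}
f(\alpha c) \ll q^{1/2+\eps} + q^{-1/k}\min\bigl(B,\, |\alpha c - a/q|^{-1/k}\bigr).
\end{equation*}
Once $q > B^{k/2^{k-1}}$ the crude bound $\min(\cdot)\le B$ already yields $q^{-1/k}B \le B^{1-1/2^{k-1}} = B^{1-2^{1-k}}$, so this estimate handles precisely the range your Weyl argument cannot. When $q \le B^{k/2^{k-1}}$ the paper uses your lifting step, but as a lower bound: $\alpha \in \fm$ forces $|\tilde c q\alpha - \tilde a| > (2k\lambda B^{k-k/2^{k-1}})^{-1}$, hence $|\alpha c - a/q|$ is bounded below, and feeding this into the displayed estimate again delivers $B^{1-2^{1-k}+\eps}$. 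Weyl is invoked only for genuinely large $q > B$, where it does give the correct exponent. To repair your argument you would need to import this major-arc estimate for the intermediate range of $q_0$ rather than relying on Weyl throughout.
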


\begin{proof}
Let $\fp$ be the set of real numbers $\beta$ such that if $q>0$ is coprime to $a$ and $|q \beta - a| \le (2kB^{k-1})^{-1}$ then $q > B$. First suppose that $\alpha c \in \fp$. By Dirichlet's approximation theorem \cite[Lemma 2.1]{Vau1997}, we may choose coprime $q > 0$ and $a$ such that $q \le 2kB^{k-1}$ and $|\alpha cq -a| \le (2kB^{k-1})^{-1}$. Since $\alpha c \in \fp$, we have $q>B$, and the bound \eqref{cruxbound} follows from Weyl's inequality \cite[Lemma 2.4]{Vau1997}.

Thus, we may assume that $\alpha c \notin \fp$. Now there exist coprime integers $q>0$ and $a$ such that $q \le B$ and $|\alpha cq - a| \le (2kB^{k-1})^{-1}$. By \cite[Theorem 4.1 and Lemma 4.6]{Vau1997}, we have
\begin{equation} \label{ch4eq} f(\alpha c) \ll q^{1/2+\varepsilon} + q^{-1/k} \min(B, |\alpha c - a/q|^{-1/k}), \end{equation}
so we may assume that $q \le B^{k/2^{k-1}}$. Write $a/c = \tilde a / \tilde c$ with $\tilde c > 0$ and $(\tilde a, \tilde c)=1$. Then $\tilde cq \le \lambda t^d B^{k/2^{k-1}}$, so the fact that $\alpha \in \fm$ gives 
\begin{equation*} |\tilde c q \alpha - \tilde a| > (2k \lambda B^{k-k/2^{k-1}})^{-1}. \end{equation*}
In particular $q|\alpha c - a/q| \gg (B^{k-k/2^{k-1}})^{-1}$ which, via the bound \eqref{ch4eq}, completes the proof.
\end{proof}
Periodicity and Hua's lemma \cite[Lemma 2.5]{Vau1997} yield
\begin{equation} \label{HuaResult} \int_0^1 |f_i(\alpha)|^{2^k} \d\alpha = \int_0^1 |f(\gamma)|^{2^k} \d\gamma \ll B^{2^k-k+\eps}, \end{equation}
and H\"older's inequality now gives
\begin{equation*} \int_0^1 |f_1(\alp) \cdots f_{2^k}(\alp)| \d \alpha \ll B^{2^k-k+\eps}. \end{equation*}
Combining this with Lemma \ref{crux} on the remaining variables, we get
\begin{equation} \label{minor} \int_\fm  f_1(\alp) \cdots f_s(\alp) \d\alpha
\ll B^{s-k+\eps -2^{1-k}(s-2^k)}. \end{equation}
This completes our treatment of the minor arcs $\fm$. 

Define $Q = \lambda t^d B^{k/2^{k-1}}$. When $\beta \in \bR$ and $P >0$, put
\[
v(\beta,P) = \int_{-P}^P e(\beta \gamma^k) \d\gamma
\]
and $v(\beta) = v(\beta,1)$. For $i=1,2,\ldots,s$, write $v_i(\beta,B) = v(\beta h_i(t),B)$ and $v_i(\beta) = v_i(\beta,1)$. Let 
\begin{equation} \label{SqaDef}
S(q,a) = \sum_{m =1}^q e(am^k/q)
\end{equation}
whenever $q > 0$ and $a$ are integers.

We begin by considering some $c= h_i(t)$ and $\alpha \in \fM(q,a)$, where $q \le Q$ and $(a,q)=1$. 
Write $c/q = \tilde c / \tilde q$ with $\tilde q > 0$ and $(\tilde c, \tilde q)=1$, and note that 
\begin{equation} \label{EasyIdentity} q^{-1} S(q,ac) = \tilde q^{-1} S(\tilde q, a\tilde c).\end{equation} 
Since $|q \alpha - a| \le (2k \lambda B^{k-k/2^{k-1}})^{-1}$ and $d\delta < 2^{1-k}$, we have
\begin{equation*} | \tilde q \alpha c - a \tilde c| \le t^d (2k B^{k-k/2^{k-1}})^{-1} \le (2kB^{k-1})^{-1}. \end{equation*}
Now \cite[Theorem 4.1]{Vau1997} yields
\begin{equation} \label{FirstEstPart} f(\alpha c) - \tilde q^{-1} S(\tilde q, a \tilde c) v(\alpha c - a\tilde c / \tilde q, B) \ll \tilde q^{1/2+ \eps}. \end{equation}

Let
\begin{equation*} V_i(\alpha) =  q^{-1} S(q, a h_i(t)) v_i(\alpha  - a / q, B) \qquad (1 \le i \le s) \end{equation*}
when $\alpha \in \fM(q,a) \subseteq \fM$, and let
\begin{equation*} X_s = \int_\fM V_1(\alpha) \cdots V_s(\alpha) \d \alpha.\end{equation*}
The identity \eqref{EasyIdentity} and the bound \eqref{FirstEstPart} give
\begin{equation*} 
\prod_{i=1}^s f_i(\alp) - \prod_{i=1}^s V_i(\alpha)  \ll 
Q^{s/2+\eps} + Q^{1/2+\eps} \max_i | f_i(\alp)|^{s-1}
\end{equation*}
so, using the bound \eqref{HuaResult},
\begin{equation*} 
\int_\fM f_1(\alp) \cdots f_s(\alp) \d\alpha - X_s \ll \cE_1+ \cE_2,
\end{equation*}
where
\[
\cE_1 = Q^{s/2+1} B^{k/2^{k-1}-k+\eps} \ll (t^d)^{s/2+1}  B^{(s/2+2)k/2^{k-1}-k+\eps}\]
and
\[
\cE_2 = Q^{1/2} B^{s-k-1+\eps} \ll t^{d/2}B^{k/2^k+s-k-1 +\eps}.
\]
In light of our assumptions $s \ge 2^k+1$ and $d \delta < 2^{1-k}$, we now have
\begin{equation} \label{FirstError}
\int_\fM f_1(\alp) \cdots f_s(\alp) \d\alpha - X_s \ll t^{d/2}B^{k/2^k+s-k-1 +\eps}.
\end{equation}

Next we estimate $X_s$. Let
\begin{equation} \label{GtqDef}
\fG(t,q) = \sum_{\substack{a = 1 \\ (a,q)=1}}^q \prod_{i=1}^s q^{-1} S(q,ah_i(t)),
\end{equation}
and note that
\begin{equation} \label{Gt} \fG(t) = \sum_{q=1}^\infty \fG(t,q).\end{equation}
Let
\[
J_s(q) = \int_{-T_1}^{T_1} v_1(\bet,B) \cdots v_s(\bet,B) \d\beta,\]
where $T_1 = (2k\lambda q B^{k-k/2^{k-1}})^{-1}$, so that
\begin{equation} \label{SingularProduct} X_s = \sum_{q \le Q} J_s(q) \fG(t,q).\end{equation}

First we consider $J_s(q)$. By a change of variables,
\begin{equation*} J_s(q) = B^{s-k} \int_{-T_2}^{T_2} v_1(\bet) \cdots v_s(\bet) \d\beta, \end{equation*}
where $T_2 = B^{k/2^{k-1}}/(2k \lambda q)$. By \cite[Lemma 3.1]{BruD2012eq}, the integral
\begin{equation*} I(t)= \int_{-\infty}^\infty  v_1(\bet) \cdots v_s(\bet) \d\beta \end{equation*}
is a positive real number. The classical bound \cite[Theorem 7.3]{Vau1997} gives
\begin{equation} \label{intermezzo}
v_i(\beta) \ll |\beta h_i(t)|^{-1/k} \le |\beta|^{-1/k}.
\end{equation}
Hence
\begin{equation*}
J_s(q)B^{k-s}-I(t) \ll \Bigl( \prod_{i=1}^s |h_i(t)|^{-1/k}\Bigr) \int_{T_2}^\infty \beta^{-s/k} \d\beta,
\end{equation*}
so
\begin{equation} \label{SecondEst}
t^d( J_s(q)B^{k-s}-I(t) ) \ll (Q/q)^{1-s/k}.
\end{equation}

Next we consider $\fG(t,q)$. By the identity \eqref{EasyIdentity} and \cite[Theorem 4.2]{Vau1997}, 
\begin{equation} \label{I_2_1} \prod_{i=1}^s q^{-1} S(q,ah_i(t)) \ll q^{-s/k}\prod_{i=1}^s(h_i(t),q)^{1/k}.\end{equation}
We now exploit the fact that $h_1, \ldots, h_s$ are pairwise relatively prime in $\bQ[t]$.

\begin{lemma} \label{CoprimePoly} Let $q$ and $t$ be positive integers. Then
\begin{equation} \label{I_2_2} \prod_{i=1}^s (h_i(t),q) \ll q \end{equation}
uniformly in $t$ and $q$.
\end{lemma}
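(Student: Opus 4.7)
My plan is to leverage the pairwise coprimality of the $h_i$ in $\bQ[t]$ via a B\'ezout-type identity. For each $i \neq j$, the coprimality in $\bQ[t]$ yields polynomials $u_{ij}, v_{ij} \in \bZ[t]$ and a nonzero integer $R_{ij}$ (which one may take to be the resultant of $h_i$ and $h_j$) satisfying
\[ u_{ij}(t) h_i(t) + v_{ij}(t) h_j(t) = R_{ij}. \]
Specialising $t$ to an integer shows that $(h_i(t), h_j(t))$ divides $R_{ij}$, so the $p$-adic valuation of any common divisor of $h_i(t)$ and $h_j(t)$ is bounded by $v_p(R_{ij})$.

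Next I would work prime by prime. Fix a prime $p$, and write $a_{i,p} = v_p(h_i(t))$ and $r_p = \max_{i \neq j} v_p(R_{ij})$. The B\'ezout identity forces $\min(a_{i,p}, a_{j,p}) \le r_p$ for every pair $i \neq j$; hence, after relabelling so that $a_{1,p}$ is the largest, we must have $a_{i,p} \le r_p$ for each $i \ge 2$. Consequently
\[ \sum_{i=1}^s \min(a_{i,p}, v_p(q)) \le \min(a_{1,p}, v_p(q)) + (s-1) r_p \le v_p(q) + (s-1) r_p. \]

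Assembling the prime-local bounds, I would write
\[ \prod_{i=1}^s (h_i(t), q) = \prod_p p^{\sum_i \min(a_{i,p}, v_p(q))} \le q \cdot \Bigl( \prod_p p^{r_p} \Bigr)^{s-1}, \]
and observe that $\prod_p p^{r_p}$ divides $\prod_{i \neq j} R_{ij}$, a constant depending only on $h_1, \ldots, h_s$. The only mildly delicate point is securing integrality of the B\'ezout coefficients with a nonzero integer on the right, which is standard once one appeals to the resultant; the rest is clean prime-by-prime counting, and I do not anticipate a significant obstacle.
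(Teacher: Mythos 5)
Your proof is correct. The starting point is the same as the paper's: use the coprimality of $h_i$ and $h_j$ in $\bQ[t]$ to produce an integral B\'ezout identity $u_{ij}(t)h_i(t)+v_{ij}(t)h_j(t)=R_{ij}$ with $R_{ij}$ a nonzero integer independent of $t$ (the paper runs Euclid's algorithm over $\bQ[t]$ and clears denominators; you invoke the resultant, which amounts to the same thing), so that $(h_i(t),h_j(t))$ is uniformly bounded. Where you diverge is the combination step. You work prime by prime: since $\min(a_{i,p},a_{j,p})\le r_p$ for every pair, at each prime at most one valuation $a_{i,p}$ can exceed $r_p$, giving $\sum_i \min(a_{i,p},v_p(q))\le v_p(q)+(s-1)r_p$ and hence $\prod_i(h_i(t),q)\ll q$. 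The paper instead uses the gcd inequality $(x,q)(y,q)\le(xy,q)(x,y)$ and induction to reduce $\prod_i(h_i(t),q)$ to $\bigl(\prod_i h_i(t),q\bigr)\prod_{i<j}(h_i(t),h_j(t))$, and then bounds the first factor by $q$. Both routes are clean; yours is more explicit and makes the ``at most one $h_i(t)$ can be highly divisible by $p$'' intuition visible, while the paper's is a bit slicker and avoids valuation bookkeeping. One cosmetic remark: the relabelling making $a_{1,p}$ maximal depends on $p$, which is harmless since you only use it inside a per-prime sum, but it is worth saying so explicitly to avoid any appearance of a global relabelling.
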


\begin{proof}
For each pair $h_i, h_j$ with $i \ne j$, perform Euclid's algorithm in $\bQ[t]$ and clear denominators to give a positive integer $m_{ij}$ and polynomials $\varphi_{ij}, \psi_{ij} \in \bZ[t]$ such that 
\begin{equation*} h_i\varphi_{ij} + h_j \psi_{ij} = m_{ij}. \end{equation*}
Then
\begin{equation} \label{PolyGCDbound}(h_i(t), h_j(t)) \le m_{ij}, \end{equation}
and this bound depends only on the polynomials $h_i$ and $h_j$. Note that if $x,y \in \bN$ then $(x,q)(y,q) \le (xy,q)(x,y)$. This shows by induction on $n \le s$ that
\[ \prod_{i=1}^n (h_i(t),q) \le \Big(\prod_{i=1}^n h_i(t), q \Big) \prod_{1 \le i < j \le n} (h_i(t), h_j(t)).\]
Hence,
\[ \prod_{i=1}^s (h_i(t),q) \le q \prod_{1 \le i < j \le s} m_{ij}.\]
\end{proof}

The inequalities \eqref{I_2_1} and \eqref{I_2_2} yield
\begin{equation} \label{SingularBound1} \fG(t,q) \ll q^{1+(1-s)/k} \end{equation}
so, by equation \eqref{SingularProduct} and the inequality \eqref{SecondEst},
\begin{equation} \label{SecondError}
t^d \Big( X_s B^{k-s} - I(t) \sum_{q \le Q} \fG(t,q)  \Big) \ll 
Q^{(2k+1-s)/k}.
\end{equation}
The identity \eqref{Gt} and the bound \eqref{SingularBound1} give
\begin{equation} \label{SingularSeriesBounded} \fG(t) \ll 1 \end{equation}
and
\begin{equation} \label{ThirdError}
I(t) \sum_{q \le Q} \fG(t,q)  - I(t) \fG(t)  \ll I(t)Q^{(2k+1-s)/k}.
\end{equation}

The error bounds \eqref{minor}, \eqref{FirstError}, \eqref{SecondError} and \eqref{ThirdError} yield
\begin{equation} \label{IndividualError}
t^d (N(B,t)B^{k-s} - I(t) \fG(t) ) \ll E+I(t) t^dQ^{(2k+1-s)/k},
\end{equation}
where
\[ E= t^d B^{\eps - 2^{1-k}(s-2^k)} +  t^{3d/2} B^{k/2^k-1+\eps} + Q^{(2k+1-s)/k}. \]

\begin{lemma} \label{SingularIntegral} There exists a constant $C>0$ such that 
\begin{equation*} I(t)t^d = C + O(1/t)
\end{equation*}
for positive integers $t$.
\end{lemma}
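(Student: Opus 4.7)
The plan is to rescale to isolate the $t$-dependence, then compare the rescaled integral to its limit as $t\to\infty$.

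First, substitute $\eta = \beta t^d$ to obtain
\[
t^d I(t) = \int_{-\infty}^\infty \prod_{i=1}^s \tilde v_i(\eta)\, \d\eta, \qquad \tilde v_i(\eta) = \int_{-1}^1 e(\eta (h_i(t)/t^d)\gamma^k)\, \d\gamma.
\]
Let $a_i$ denote the leading coefficient of $h_i$, so $h_i(t)/t^d = a_i + O(1/t)$, and define
\[
v_i^*(\eta) = \int_{-1}^1 e(\eta a_i \gamma^k)\, \d\gamma, \qquad C = \int_{-\infty}^\infty \prod_{i=1}^s v_i^*(\eta)\, \d\eta.
\]
Since the $a_i$ do not all share a sign (when $k$ is even), \cite[Lemma 3.1]{BruD2012eq} applied to the diagonal form with coefficients $a_i$ shows that $C$ converges and is a positive real number.

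Next, I would estimate $t^d I(t) - C$ by comparing the integrands. For each $i$, the mean value theorem and $|e(x)-e(y)| \le 2\pi|x-y|$ give
\[
|\tilde v_i(\eta) - v_i^*(\eta)| \ll |\eta|/t,
\]
while the trivial bound together with the classical estimate \eqref{intermezzo} yields $|\tilde v_i(\eta)|, |v_i^*(\eta)| \ll \min(1, |\eta|^{-1/k})$. A telescoping identity
\[
\prod_{i=1}^s \tilde v_i(\eta) - \prod_{i=1}^s v_i^*(\eta) = \sum_{i=1}^s \bigl(\tilde v_i(\eta) - v_i^*(\eta)\bigr) \prod_{j<i} v_j^*(\eta) \prod_{j>i} \tilde v_j(\eta)
\]
then bounds the difference of integrands by $\ll \min(|\eta|/t, |\eta|^{-1/k}) \cdot \min(1, |\eta|^{-(s-1)/k})$.

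Finally, I would integrate this bound, splitting the range at $|\eta|=1$ and at $|\eta|= t^{k/(k+1)}$. The region $|\eta|\le 1$ contributes $O(1/t)$ trivially; the region $1\le |\eta|\le t^{k/(k+1)}$ contributes $\ll t^{-1}\int_1^\infty \eta^{1-(s-1)/k}\, \d\eta$, which is $O(1/t)$ because $s\ge 2^k+1 \ge 2k+1$ makes the integral converge; and the region $|\eta|\ge t^{k/(k+1)}$ contributes $\ll t^{(k-s)/(k+1)} = O(1/t)$, again using $s\ge 2k+1$. Combining these yields $t^d I(t) = C + O(1/t)$.

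The main obstacle is verifying the convergence and positivity of $C$: one must check that replacing $h_i(t)$ by its leading term preserves the hypotheses of \cite[Lemma 3.1]{BruD2012eq}, in particular the mixed-sign condition when $k$ is even. The remaining work consists of routine, if slightly delicate, splitting of the integration range to absorb the $|\eta|/t$ bound against the decay $|\eta|^{-(s-1)/k}$.
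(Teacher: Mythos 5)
Your proposal is correct and takes a genuinely different, and notably cleaner, route than the paper. The paper first substitutes $\zeta_i = \gamma_i^k$ to express $I(t)$ as an integral of the singular kernel $(\zeta_1\cdots\zeta_s)^{1/k-1}$ over a box, regularizes by excising a $\rho$-neighbourhood of the coordinate hyperplanes, verifies that the resulting one-variable marginal $V_\rho$ is of bounded variation, applies Fourier inversion to recover $V_\rho(0)$, lets $\rho \to 0^+$, and only then rescales and bounds a symmetric-difference error to extract $C$. You instead rescale $\eta = \beta t^d$ directly in the oscillatory representation, compare $h_i(t)/t^d$ with its limit $a_i$, and telescope the product of $s$ factors using the elementary estimate $|\tilde v_i(\eta) - v_i^*(\eta)| \ll \min(|\eta|/t, |\eta|^{-1/k})$ together with a three-way split of the $\eta$-range. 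Both routes identify the same constant $C = \int_{-\infty}^\infty v(\beta a_1)\cdots v(\beta a_s)\,\mathrm d\beta$ and appeal to the same Br\"udern--Dietmann lemma for its positivity (the mixed-sign hypothesis on the leading coefficients is exactly what is needed when $k$ is even). Your version sidesteps the Fourier inversion and bounded-variation machinery entirely and is substantially shorter.

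Two small points worth fixing if you write this up in full. First, for the middle range you extend $\int_1^{t^{k/(k+1)}} \eta^{1-(s-1)/k}\,\mathrm d\eta$ to $\int_1^\infty$, which requires $1 - (s-1)/k < -1$, i.e.\ $s \ge 2k+2$, not $s \ge 2k+1$ as written (at $s = 2k+1$ you would only get $O((\log t)/t)$). This is harmless since both hypotheses of the paper, $s \ge 2^k+1$ and $s \ge 2k^2-2$, force $s \ge 2k+2$ for $k \ge 3$, but the inequality as stated is off by one. Second, the uniform-in-$t$ bound $|\tilde v_i(\eta)| \ll |\eta|^{-1/k}$ needs $|h_i(t)/t^d|$ bounded below, which holds once $t$ is large in terms of $h_1,\ldots,h_s$; like the paper, you should say explicitly that the finitely many small $t$ are absorbed into the constant in $O(1/t)$.
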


\begin{proof} We may assume that $t$ is large in terms of $h_1, \ldots, h_s$. First assume that $k$ is odd. By a change of variables,
\begin{equation*} I(t) = k^{-s} \int_{-\infty}^\infty  \int_{(-1,1)^s} (\zeta_1 \cdots \zeta_s)^{1/k-1} e(\beta {\bf h} \cdot \boldsymbol \zeta) \d  \boldsymbol \zeta \d\beta, \end{equation*}
where $\mathbf{h} = (h_1(t),\ldots, h_s(t))$. The strategy is to change the order of integration and use Fourier inversion. It transpires that it is of key importance that a certain function yet to appear is of bounded variation. We begin by tweaking the integral so that this will follow easily. For $0 < \rho < 1$ put
\begin{equation*} W(\rho) = \{ \boldsymbol \zeta \in (-1,1)^s: | \zeta_i |> \rho \text{ for } i=1,2,\ldots,s \}
\end{equation*}
and
\begin{equation*}  U(\rho) = (-1,1)^s \setminus W(\rho).\end{equation*}
For $S \subseteq (-1,1)^s$, let
\begin{equation*} I_S  = \int_{-\infty}^\infty \int_S (\zeta_1 \cdots \zeta_s)^{1/k-1} e(\beta {\bf h} \cdot \boldsymbol \zeta) \d \boldsymbol \zeta \d\beta,\end{equation*}
so that $k^s I(t) = I_{U(\rho)} + I_{W(\rho)}$. The bounds
\begin{equation*}
\int_{-\rho}^{\rho} \zeta_i^{1/k-1}\d\zeta_i \ll \rho^{1/k}
\end{equation*}
and \eqref{intermezzo} imply that $I_{U(\rho)} \ll \rho^{1/k} \to 0$ as $\rho \to 0^+$, so
\begin{equation} \label{IntegralLimit} k^s I(t) = \lim_{\rho \to 0^+} I_{W(\rho)}.\end{equation}

By Fubini's theorem we deduce
\begin{equation*}
I_{W(\rho)} = \lim_{X \to \infty} \int_{W(\rho)} (\zeta_1 \cdots \zeta_s)^{1/k -1} \frac{\sin(2\pi X  {\bf h} \cdot \boldsymbol \zeta)} {\pi  {\bf h} \cdot \boldsymbol \zeta} \d \boldsymbol {\zeta}.
\end{equation*}
Write $a_s = |h_s(t)|$,
\begin{equation*} \boldsymbol \zeta' = (\zeta_1,\ldots,\zeta_{s-1}), 
\qquad Y = \sum_{i=1}^{s-1}h_i(t)\zeta_i,\qquad \cC_\rho = (-1, -\rho) \cup (\rho,1) \end{equation*}
and \[
\cC_\rho'  = (Y- a_s, Y- a_s \rho) \cup (Y+a_s \rho, Y+a_s).
\]
Changing variables from $\zeta_s$ to $u = {\bf h} \cdot \boldsymbol \zeta$ shows that $a_s^{1/k} I_{W(\rho)}$ equals
\begin{equation*} \lim_{X \to \infty} \int_{\cC_\rho^{s-1}} (\zeta_1 \cdots \zeta_{s-1})^{1/k -1}
  \int_{\cC_\rho'} (u-Y)^{1/k -1} \frac{\sin(2\pi X u)}{\pi u} \d u \d \boldsymbol \zeta '.
\end{equation*}
For real numbers $u$, let
\begin{align*}
\cB_\rho (u) &= \{ \boldsymbol \zeta' \in \cC_\rho^{s-1}: a_s \rho < |u-Y| < a_s \}, \\
g_u(\boldsymbol \zeta') &= (\zeta_1 \cdots \zeta_{s-1})^{1/k-1} (u-Y)^{1/k-1}
\end{align*}
and \[
V_\rho(u) = \int_{\cB_\rho (u)} g_u(\boldsymbol \zeta') \d \boldsymbol \zeta',
\]
so that
\begin{equation*}
a_s^{1/k} I_{W(\rho)} = \lim_{X \to \infty} \int_{-\infty}^\infty V_\rho(u) \frac{\sin(2\pi X u)}{\pi u} \d u.
\end{equation*}

The function $V_\rho(\cdot)$ is compactly supported. To show that it is of bounded variation, we show that the gradient
$(V_\rho(u) - V_\rho(v))/(u-v)$
is bounded for $u\ne v$. Since the numerator is bounded, it suffices to consider $0 < |u-v| < a_s \rho / 2$. Here
\[ \frac{V_\rho(u) - V_\rho(v)}{u-v} = \cI + \cJ, \]
where
\begin{equation*}
\cI = \int_{\cB_\rho(u)} (\zeta_1 \cdots \zeta_{s-1})^{1/k-1} \frac{(u-Y)^{1/k-1} - (v-Y)^{1/k-1}}{u-v} \d \boldsymbol \zeta'
\end{equation*}
and 
\begin{equation*}
\cJ = \frac{ \{ \int_{\cB_\rho(u) \setminus \cB_\rho(v)} -  \int_{\cB_\rho(v) \setminus \cB_\rho(u)} \} g_v(\boldsymbol\zeta') \d \boldsymbol \zeta'} {u-v}.
\end{equation*}
The integral $\cI$ is bounded because the integrand is bounded, while the quantity $\cJ$ is bounded because the integrand is bounded and the volume of integration is $O(|u-v|)$. The function $V_\rho(\cdot)$ is therefore of bounded variation.

Using the Fourier inversion theorem,
\begin{align} \notag
a_s^{1/k} I_{W(\rho)}
&= \lim_{X \to \infty} \int_{-\infty}^\infty V_\rho(u) \int_{-X}^X e(xu) \d x  \d u
\\ \label{FIT} &= \int_{-\infty}^\infty \int_{-\infty}^\infty e(xu)V_\rho(u) \d u \d x = V_\rho(0).
\end{align}
Let
\begin{equation*}
\cB(0) = \{ \boldsymbol \zeta' \in (-1,1)^{s-1}: |Y| < a_s\}
\end{equation*}
and
\begin{equation*} V(0) = \int_{\cB(0)} g_0(\boldsymbol \zeta') \d \boldsymbol \zeta'. \end{equation*}
By the monotone convergence theorem \mbox{$V_\rho(0) \to V(0)$} as $\rho \to 0^+$. Equations \eqref{IntegralLimit} and \eqref{FIT} now give
\begin{equation} \label{V0} k^s a_s^{1/k} I(t) = V(0). \end{equation}

Next we consider $V(0) = \int_{\cB(0)} (\zeta_1 \cdots \zeta_{s-1}Y)^{1/k-1} \d\boldsymbol \zeta'$. For $i=1,2,\ldots, s$, let $c_i$ denote the leading coefficient of the polynomial $h_i$. Put
\begin{equation*}
\eta_i = 
\begin{cases}
h_i(t) / (c_it^d) \cdot \zeta_i , & i=1,2,\ldots,s-1 \\
-Y/(c_st^d), &i=s.\end{cases}
\end{equation*}
Since $\eta_i = (1+O(1/t)) \zeta_i$ for $i=1,2,\ldots,s-1$ and $a_s = (1+O(1/t)) |c_s|t^d$, we have
\begin{equation*}
\frac{(\zeta_1 \cdots \zeta_{s-1}Y)^{1/k-1}}{(\eta_1 \cdots \eta_s)^{1/k-1}}
 =(1+O(1/t))(c_st^d)^{1/k-1}
= \frac{(1+O(1/t))a_s^{1/k}}{|c_s|t^d}.
\end{equation*}
Observe that
\begin{equation} \label{EtaS} 
\eta_s = -c_s^{-1}(c_1\eta_1+\ldots+c_{s-1}\eta_{s-1}).
\end{equation}
We recall equation \eqref{V0} and change variables from $\zeta_1,\ldots,\zeta_{s-1}$ to $\eta_1,\ldots,\eta_{s-1}$. The Jacobian determinant is 
\[ \prod_{i=1}^{s-1} \frac{\partial \zeta_i}{\partial \eta_i} = 1+O(1/t),\]
so
\begin{equation} \label{cp}
k^s t^d I(t) = \frac{1+O(1/t)}{|c_s|} \int_{R_1} (\eta_1 \cdots \eta_s)^{1/k-1} \d\eta_1 \cdots \d\eta_{s-1},
\end{equation}
where 
\begin{equation*} R_1 = \{ (\eta_1, \ldots,  \eta_{s-1}) : |\eta_i| < h_i(t)/ (c_it^d) \text{ for } i=1,2,\ldots,s\}.
\end{equation*}
Note that $h_i(t)/(c_it^d) > 0$ for all $i$, since $t$ is large. 

Recall equation \eqref{EtaS}, and let 
\begin{equation*} R_2 = \{ (\eta_1, \ldots, \eta_{s-1}) \in (-1,1)^{s-1} : |\eta_s| <1 \}.\end{equation*}
Next we establish that
\begin{equation} \label{SymDif}
\int_{\Delta} (\eta_1 \cdots \eta_s)^{1/k-1} \d\eta_1 \cdots \d\eta_{s-1} \ll t^{-1},
\end{equation}
where $\Delta = R_1 \Delta R_2$ is the symmetric difference. For any $(\eta_1, \ldots, \eta_{s-1}) \in \Delta$, there exist $i,j \in \{1,2, \ldots, s \}$ satisfying
\begin{equation} \label{subdivide}
i \ne j, \qquad |\eta_i| - 1 \ll t^{-1}, \qquad \eta_j \gg 1.
\end{equation}
Hence $\Delta = \cup_j \Delta_j$, where $\Delta_j$ is the set of $(\eta_1, \ldots, \eta_{s-1}) \in \Delta$ such that the conditions \eqref{subdivide} are met for some $i$. Changing variables from $\eta_j$ to $\eta_s$ (if $j \ne s$) shows that
\[ 
\int_{\Delta_j} (\eta_1 \cdots \eta_s)^{1/k-1} \d\eta_1 \cdots \d\eta_{s-1} \ll t^{-1}
\]
for $j=1,2,\ldots,s$, which proves the inequality \eqref{SymDif}.

Recalling equation \eqref{cp}, we now have
\begin{equation*}
I(t)t^d = \frac{1+O(1/t)}{k^s|c_s|} \int_{R_2} (\eta_1 \cdots \eta_s)^{1/k-1} \d\eta_1 \cdots \d\eta_{s-1} = C+O(1/t),
\end{equation*}
where we define the positive constant
\begin{equation*}
C =  \frac{1}{k^s |c_s|} \int_{R_2} (\eta_1 \cdots \eta_s)^{1/k-1} \d\eta_1 \cdots \d\eta_{s-1}.
\end{equation*}
To see that $C<\infty$, observe that the same procedure with $c_1,\ldots, c_s$ in place of $h_1(t), \ldots, h_s(t)$ yields
\begin{equation*} C = \int_{-\infty}^{\infty} 
v(\beta c_1) \cdots v(\bet c_s) \d \beta \end{equation*}
which, by \cite[Theorem 7.3]{Vau1997}, is well defined. This establishes Lemma \ref{SingularIntegral} in the case that $k$ is odd.

Suppose now that $k$ is even. We follow closely the case where $k$ is odd, so the notation is the same unless otherwise indicated. We now have
\begin{equation*}
I(t) = (2/k)^s \int_{-\infty}^\infty \int_{(0,1)^s} (\zeta_1 \cdots \zeta_s)^{1/k-1} e(\beta {\bf h} \cdot \boldsymbol \zeta) \d \boldsymbol \zeta \d\beta.
\end{equation*}
When $0<\rho<1$, let $W(\rho) = (\rho,1)^s$ and $U(\rho) = (0,1)^s \setminus W(\rho)$. For $S \subseteq (0,1)^s$, put
\begin{equation*} I_S  = \int_{-\infty}^\infty \int_S(\zeta_1 \cdots \zeta_s)^{1/k-1} e(\beta {\bf h} \cdot \boldsymbol \zeta) \d \boldsymbol \zeta\d\beta,\end{equation*}
so that $(k/2)^s I(t) = I_{U(\rho)} + I_{W(\rho)}$. Again $I_{U(\rho)} \to 0$ as $\rho \to 0^+$, so
\begin{equation*} (k/2)^s I(t)  = \lim_{\rho \to 0^+} I_{W(\rho)}.\end{equation*}

For $i=1,2,\ldots,s$, let $c_i$ denote the leading coefficient of $h_i$, and without loss of generality $c_s>0$. Put $a_s = h_s(t)$. Since $t$ is large, we have $a_s > 0$. We derive
\begin{equation*}
a_s^{1/k}I_{W(\rho)} = \lim_{X\to \infty} \int_{\infty}^\infty \frac{\sin(2\pi X u)}{\pi u}  \int_{\cB_{\rho}(u)}g_u( \boldsymbol \zeta')\d \boldsymbol \zeta' \d u,
\end{equation*}
where $\boldsymbol \zeta' = (\zeta_1,\ldots,\zeta_{s-1})$, $Y= \sum_{i=1}^{s-1}h_i(t) \zeta_i$,
\[
g_u(\boldsymbol \zeta') = (\zeta_1 \cdots \zeta_{s-1})^{1/k-1} (u-Y)^{1/k-1} \]
and \[
\cB_\rho(u) =  \{ \boldsymbol \zeta' \in (\rho,1)^{s-1}: \rho a_s < u-Y < a_s \}.
\]
Continuing to follow the case where $k$ is odd, we deduce that
\begin{equation*} 
(k/2)^s a_s^{1/k} I(t) = \int_{\cB(0)} (\zeta_1 \cdots \zeta_{s-1}(-Y))^{1/k-1} \d\boldsymbol \zeta', 
\end{equation*}
where 
\begin{equation*}
\cB(0) = \big\{ \boldsymbol \zeta' \in (0,1)^{s-1}: 0 < -Y < a_s \big\}.
\end{equation*}

With $\eta_s = -c_s^{-1}(c_1\eta_1 + \ldots + c_{s-1}\eta_{s-1})$, let
\[
 R_1 = \{ (\eta_1 ,\ldots, \eta_{s-1}) : 0< \eta_i < h_i(t)/ (c_it^d) \text{ for } i=1,2,\ldots,s\}
\]
and 
\[
R_2 = \{ (\eta_1, \ldots, \eta_{s-1}) \in (0,1)^{s-1} :  0< \eta_s <1 \}.
\]
By changing variables with $\eta_i = h_i(t)/ (c_it^d) \cdot \zeta_i$ for $i=1,2,\ldots,s-1$, we deduce that
\begin{equation*}
(k/2)^s t^d  I(t) =  c_s^{-1}(1+O(1/t)) \int_{R_1} (\eta_1 \cdots \eta_s)^{1/k-1} \d\eta_1 \cdots \d\eta_{s-1},
\end{equation*}
and hence
\begin{equation*}
I(t) t^d = C+O(1/t),
\end{equation*}
where 
\begin{equation*}
C =  c_s^{-1}(2/k)^s \int_{R_2} (\eta_1 \cdots \eta_s)^{1/k-1} \d\eta_1 \cdots \d\eta_{s-1}.
\end{equation*}
The $c_i$ are not all of the same sign, so $R_2$ has nonempty interior, so $C>0$, and $C<\infty$ because, as in the odd case,
\begin{equation*} C = \int_{-\infty}^{\infty} 
v(\beta c_1) \cdots v(\bet c_s) \d \beta. \end{equation*}
\end{proof}

Combining Lemma \ref{SingularIntegral} with the inequalities \eqref{SingularSeriesBounded} and \eqref{IndividualError} gives 
\begin{equation*}
N(B,t)t^dB^{k-s} - C\fG(t)  \ll E + 1/t,
\end{equation*}
which yields the error bound \eqref{alt} subject to the hypotheses $s \ge 2^k + 1$ and $d \delta < 2^{1-k}$. 

It remains to discuss the case where \begin{equation} \label{NewAssumptions}
s \ge 2k^2-2 \qquad \text{and} \qquad d\delta < 1/3. \end{equation}
If $a \ge 0$ and $q>0$ are integers, let $\fM(q,a)$ be the set of $\alpha \in (0,1)$ such that $|q\alpha-a| \le (2k B^{k-1})^{-1}$. Let $\fM$ be the (disjoint) union of the arcs $\fM(q,a)$ over $0 \le a \le q \le Q$ with $(a,q)=1$, where
\begin{equation*}
Q = \lambda t^d B / (2k),
\end{equation*}
and let $\fm = (0,1) \setminus \fM$. 

Let $\fp$ be the set of $\beta \in (0,1)$ such that if $(a,q) =1 $ and $|q \beta - a| \le (2kB^{k-1})^{-1}$ then $q > B/(2k)$. For $i=1,\ldots,s$ let $\fp_i$ be the set of $\beta \in (0,|h_i(t)|)$ such that if $(a,q) =1$ and $|q \beta - a| \le (2kB^{k-1})^{-1}$ then $q > B/(2k)$. It is routinely verified that if $\alpha \in \fm$ then 
\begin{equation*} \alpha |h_i(t)| \in \fp_i =  \fp \cup (1+\fp) \cup \ldots \cup (|h_i(t)| - 1 + \fp). \end{equation*}
Thus, by periodicity and \cite[Theorem 10.1]{Woo2013} we have, for $i=1,2,\ldots, s$, 
\[
\int_\fm |f_i(\alp)|^s \d \alpha \le  \int_{\fp} |f(\beta)|^s \d\beta \ll B^{s-k-1+\eps}.
\]
Now, by H\"older's inequality,
\begin{equation*} \int_\fm f_1(\alp) \cdots f_s(\alp) \d\alp \ll B^{s-k-1+\eps}. \end{equation*}

For the major arcs we follow the previous analysis, \emph{mutatis mutandis}, so the notation is the same unless otherwise indicated. Using the definitions preceding equation \eqref{EasyIdentity}, we have $| \tilde q \alpha c - a \tilde c| \le \lambda t^d /(2kB^{k-1})$, so \cite[Theorem 4.1]{Vau1997} and equation \eqref{EasyIdentity} give
\begin{equation*} f(\alpha c) - q^{-1}S(q,ac) v(\alpha c- ac/q) \ll \tilde q^{1/2+\eps} (1+t^d B/\tilde q)^{1/2} \ll Q^{1/2+\eps}. \end{equation*}
Invoking \cite[Lemma 10.2]{Woo2013} and using periodicity now yields
\begin{equation*} 
\int_\fM f_1(\alp) \cdots f_s(\alp) \d\alpha - X_s \ll \cE_1+ \cE_2,
\end{equation*}
where
\[
\cE_1 = Q^{s/2+1} B^{1-k+\eps} \ll (t^d)^{s/2+1} B^{s/2+2-k+\eps} \]
and
\[
\cE_2 = Q^{1/2} B^{s-k-1+\eps} \ll t^{d/2}B^{s-k-1/2+\eps}.
\]
In light of our assumptions \eqref{NewAssumptions}, we now have
\[
\int_\fM f_1(\alp) \cdots f_s(\alp) \d\alpha - X_s  \ll t^{d/2}B^{s-k-1/2+\eps}.
\]

This time
\begin{equation*} J_s(q) = B^{s-k} \int_{-B/(2kq)}^{B/(2kq)} v_1(\beta) \cdots v_s(\beta) \d \beta, \end{equation*}
and we again have the error bound \eqref{SecondEst}, but with $Q = \lambda t^d B / (2k)$. The remainder of the analysis is identical, and we conclude that 
\begin{equation*} N(B,t)t^dB^{k-s} - C\fG(t) \ll t^{3d/2}B^{\eps-1/2} + Q^{(2k+1-s)/k} +1/t, \end{equation*}
with $C > 0$ as in Lemma \ref{SingularIntegral}. We now have the error bound \eqref{alt} under the hypotheses \eqref{NewAssumptions} also, which completes the proof of Theorem \ref{IndividualAsymptotic}.

\section{Averaging on the singular series}
\label{AverageSS}
Put
\begin{equation} \label{Sdef}
S(q) = q^{-1} \sum_{t=1}^q \fG(t,q).
\end{equation}
Recalling equations \eqref{Gdef}, \eqref{SqaDef} and \eqref{GtqDef}, we note that 
\[ \fG = \sum_{q=1}^\infty S(q).\]
The inequality \eqref{SingularBound1} ensures that $\sum_{q=1}^\infty S(q)$ converges absolutely, so $\fG \in \bC$. The following is the key to Theorem \ref{average}.
\begin{lemma} \label{SSaverage} The constant $\fG$ is a positive real number, and
\begin{equation} \label{SSaverageEq} T^{-1} \sum_{t \le T} \fG(t) = \fG + O(T^{\eps-1}+T^{\varepsilon+2-(s-1)/k}). \end{equation}
\end{lemma}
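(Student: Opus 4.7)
The plan is to expand $\fG(t) = \sum_{q \ge 1} \fG(t,q)$, swap the order of summation, exploit that $t \mapsto \fG(t,q)$ is $q$-periodic, and control the tails via \eqref{SingularBound1}. The standing hypothesis forces $s > 2k+1$ (since $2^k + 1 > 2k+1$ and $2k^2 - 2 > 2k+1$ for $k \ge 3$), so \eqref{SingularBound1} makes $\sum_q \fG(t,q)$ absolutely and uniformly convergent in $t$. The same bound gives $|S(q)| \ll q^{1+(1-s)/k}$, so $\fG = \sum_q S(q)$ converges absolutely.

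First I would split
\begin{equation*}
T^{-1} \sum_{t \le T} \fG(t) = \sum_{q \le T} T^{-1} \sum_{t=1}^T \fG(t,q) \, + \, T^{-1} \sum_{t=1}^T \sum_{q > T} \fG(t,q),
\end{equation*}
and bound the tail by $\sum_{q > T} q^{1+(1-s)/k} \ll T^{2-(s-1)/k}$. For each fixed $q \le T$ in the main double sum, write $T = aq + r$ with $0 \le r < q$ and use the $q$-periodicity of $\fG(t,q)$ to obtain
\begin{equation*}
\sum_{t=1}^T \fG(t,q) = a q \, S(q) + O(r \, q^{1+(1-s)/k}),
\end{equation*}
so $T^{-1} \sum_{t=1}^T \fG(t,q) = S(q) + O(q^{2+(1-s)/k}/T)$. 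Summing over $q \le T$, the cumulative error is $T^{-1} \sum_{q \le T} q^{2+(1-s)/k}$; a routine case split on whether $s < 3k+1$, $s = 3k+1$, or $s > 3k+1$ (absorbing the boundary logarithm into $T^\eps$) shows this is $O(T^{\eps-1} + T^{\eps+2-(s-1)/k})$. The surviving main term is $\sum_{q \le T} S(q) = \fG + O(T^{2-(s-1)/k})$ by absolute convergence, and assembling everything yields \eqref{SSaverageEq}.

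For the positivity claim, reality is immediate since each $\fG(t) \in \bR$. By Chinese remaindering the $a$-sum in \eqref{GtqDef} (showing $\fG(t, \cdot)$ is multiplicative in $q$) and then applying CRT a second time to the $t$-sum in \eqref{Sdef}, one sees that $S$ is multiplicative. Absolute convergence then gives the Euler product $\fG = \prod_p \sigma_p$ with $\sigma_p = \sum_{j \ge 0} S(p^j)$. Since $p^{-j} \sum_{t=1}^{p^j} \fG(t,p^j) = p^{-J} \sum_{t=1}^{p^J} \fG(t,p^j)$ for any $J \ge j$, rearranging yields
\begin{equation*}
\sum_{j=0}^{J} S(p^j) = p^{-Js} \cdot \#\{(t,\mathbf{x}) \bmod p^J : h_1(t) x_1^k + \ldots + h_s(t) x_s^k \equiv 0 \pmod{p^J}\},
\end{equation*}
so $\sigma_p$ is the $p$-adic density of zeros of \eqref{start} in the $(s+1)$ variables $(t, \mathbf{x})$. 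For every $p \nmid k$ large enough that the $h_i \bmod p$ do not cover $\bF_p$, one exhibits a nonsingular mod-$p$ solution by choosing $t_0 \in \bF_p$ with $h_i(t_0) \not\equiv 0 \pmod p$ for all $i$ and then solving in $\mathbf{x}$; Hensel's lemma then lifts to give $\sigma_p > 0$. The finitely many remaining primes are handled by direct inspection using the coprimality and no-integer-root hypotheses on the $h_i$.

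The asymptotic identity itself is essentially bookkeeping once periodicity and absolute convergence are in place; the main obstacle is arguing positivity cleanly at the small primes, where the Hensel argument must be supplemented by a case-by-case verification rather than a single generic recipe.
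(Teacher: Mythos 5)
Your treatment of the asymptotic formula \eqref{SSaverageEq} is correct and follows essentially the same route as the paper: expand $\fG(t)=\sum_q\fG(t,q)$, use $q$-periodicity of $\fG(\cdot,q)$ to isolate a main term $S(q)$, and control tails and remainders via \eqref{SingularBound1}. The derivation $\sum_{j\le J}S(p^j) = p^{-Js}\#\{(t,\mathbf{x})\bmod p^J : \ldots\}$ and the Euler product $\fG=\prod_p \sigma_p$ are also fine and match the paper's $T(p)=\lim_r p^{-rs}M(p^r)$ formulation.

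The gap is in the positivity of the local factors at small primes. Your Hensel argument is sound for $p$ large: when $p\nmid k$ and $p$ exceeds the degrees and coefficients involved, one can pick $t_0$ with all $h_i(t_0)\not\equiv 0\pmod p$, use Chevalley--Warning (valid since $s>k$) to get a nontrivial zero of $\sum a_i x_i^k$ over $\bF_p$, note it is nonsingular because some $ka_ix_i^{k-1}\ne 0$, and lift. But for the finitely many remaining primes (in particular $p\mid k$, or small $p$ where the $h_i\bmod p$ are degenerate or where one must solve modulo $p^\gamma$ with $\gamma>1$), the sentence ``handled by direct inspection using the coprimality and no-integer-root hypotheses'' does not constitute an argument. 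Those hypotheses on the $h_i$ alone give no solubility information at such primes; what is actually needed is the largeness of $s$. The paper resolves this uniformly by invoking Davenport and Lewis \cite{DL1963} (together with Lewis \cite{Lew1957} for cubics) to guarantee a primitive solution $\mathbf{y}$ modulo $p^{\gamma+m}$ of $\sum a_i y_i^k\equiv 0$, and then lifts. Without Davenport--Lewis or an equivalent input tied to the hypothesis $s\ge 2^k+1$ or $s\ge 2k^2-2$, the small-prime case is genuinely open in your write-up.
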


\begin{proof} We begin by demonstrating the asymptotic formula \eqref{SSaverageEq}. Recall equations \eqref{SqaDef} and \eqref{GtqDef}, and recall that $T$ is a positive integer. For a given positive integer $q$, let $r = r(q,T)$ denote the remainder when $T$ is divided by $q$. By periodicity modulo $q$, we have
\begin{equation*} \sum_{t=r+1}^T \fG(t,q) = \frac{T -r}q \sum_{t=1}^q \fG(t,q).\end{equation*}
Recalling equation \eqref{Gt}, we now have
\begin{align*}T^{-1} \sum_{t \le T} \fG(t) - \fG &=  
\sum_{q=1}^\infty T^{-1} \Big( \sum_{t\le T} \fG(t,q) - \frac Tq \sum_{t=1}^q \fG(t,q) \Big) \\
&= \sum_{q=1}^\infty T^{-1} \Big( \sum_{t=1}^r \fG(t,q) - \frac rq \sum_{t=1}^q \fG(t,q) \Big).
\end{align*}

Noting that $r(q,T)=T$ when $q>T$, the inequality \eqref{SingularBound1} now gives
\begin{equation*}T^{-1} \sum_{t \le T} \fG(t) - \fG
\ll T^{-1} \sum_{q \le T} r	q^{1+(1-s)/k}+ \sum_{q>T}  q^{1+(1-s)/k}.
\end{equation*}
This yields equation \eqref{SSaverageEq}, since
\begin{align} \notag
\sum_{q \le T}r q^{1+(1-s)/k} &= \sum_{r=0}^{T-1} r \sum_{\substack{q>r \\ q | (T-r) }} q^{1+(1-s)/k}
\ll T^\eps \sum_{r=1}^{T-1} r^{2+(1-s)/k} \\
&\label{calculation} \ll T^{\eps} + T^{\varepsilon+3-(s-1)/k}.
\end{align}

Next we establish that $\fG$ is a positive real number. By following the proof of \cite[Lemma 2.11]{Vau1997}, we deduce that $S(\cdot)$ is multiplicative. Since $S(\cdot)$ is multiplicative and $\fG = \sum_{q=1}^\infty S(q)$ converges absolutely, we have the absolutely convergent Euler product
\begin{equation} \label{EulerProd} 
\fG = \prod_p T(p), 
\end{equation}
where
\[
T(p) = \sum_{h=0}^\infty S(p^h). 
\]

In order to prove that $\fG > 0$ it remains to show that $T(p) > 0$ for each prime $p$. Let
\begin{equation*}
M(q) = q^{-1} \sum_{r =1}^q \sum_{t=1}^q \sum_{\mathbf{x}} \prod_{i=1}^s e( q^{-1}r h_i(t)x_i^k ),
\end{equation*}
where the inner summation is over $x_1, \ldots, x_s$ modulo $q$. By orthogonality, $M(q)$ counts integer solutions to
\begin{equation} \label{ModEqn} h_1(t)x_1^k + \ldots +h_s(t) x_s^k \equiv 0 \mmod q \end{equation}
with $1 \le x_1, \ldots, x_s, t \le q$. Writing $r/q = a/d$ with $d>0$ and $(a,d)=1$ yields
\begin{equation*}
M(q) = q^{-1} \sum_{d|q} \sum_{\substack{a=1 \\ (a,d)=1}}^d (q/d)^{s+1}
\sum_{t =1}^d \sum_{\mathbf{x}} \prod_{i=1}^s e(  d^{-1}a h_i(t)x_i^k ), \end{equation*}
where the inner summation is over $x_1, \ldots, x_s$ modulo $d$. Thus, upon recalling equations \eqref{SqaDef}, \eqref{GtqDef} and \eqref{Sdef}, we have
\begin{equation*} q^{-s}M(q) = \sum_{d|q} S(d). \end{equation*} 
In particular,
\begin{equation} \label{TpAlt}
T(p) = \lim_{r \to \infty} p^{-rs}M(p^r).
\end{equation}

Let $p^\tau \| k$, and define
\begin{equation} \label{GammaDef} \gamma =  \gamma(p) = 
\begin{cases} \tau +1, &\text{if }p>2 \text{ or } (p,\tau) = (2,0) \\
\tau+2, &\text{if }p =2 \text{ and } \tau > 0.
\end{cases}
\end{equation}
By the discussion on \cite[p. 22]{Vau1997}, the $k$th power residues modulo $p^\gamma$ are $k$th power residues modulo every power of $p$. Choose a positive integer $m$ such that \mbox{$p^m > \max_i |h_i(1)|$,} and let $r \ge \gamma+m$. There are $p^{r-m}$ possibilities for $t \mmod p^r$ that are congruent to 1 modulo $p^m$. Thus, in order to show that $T(p)>0$, it suffices to show that for such values of $t$ and $r$ there are at least $p^{(s-1)(r-\gamma-m)}$ solutions $\mathbf{x}$ modulo $p^r$ to the congruence
\begin{equation} \label{pAdicEqn}
h_1(t)x_1^k + \ldots + h_s(t)x_s^k \equiv 0 \mmod p^r.
\end{equation}

For $i=1,2,\ldots,s$, put $a_i = h_i(t)$ and note that $p^m$ does not divide $a_i$. The illustrious work of Davenport and Lewis \cite{DL1963} tells us that $s$ is large enough to guarantee a solution $\mathbf{y}$ modulo $p^{\gamma+m}$ to the congruence
\begin{equation*}
a_1y_1^k + \ldots + a_sy_s^k \equiv 0 \mmod p^{\gamma+m}
\end{equation*}
such that $p$ does not divide $\gcd(y_1,\ldots,y_s)$. Without loss of generality $p$ does not divide $y_1$, and we show that this solution lifts to the requisite $p^{(s-1)(r-\gamma-m)}$ solutions $\mathbf{x}$ modulo $p^r$ to the congruence \eqref{pAdicEqn}. There are $p^{(s-1)(r-\gamma-m)}$ ways to choose $x_2,\ldots, x_s$ modulo $p^r$ such that 
\begin{equation*} x_i \equiv y_i \mmod p^{\gamma + m} \qquad (i=2, \ldots,s),\end{equation*}
so it remains to show that, given any such $x_2,\ldots,x_s$, there exists $x_1$ such that the congruence \eqref{pAdicEqn} is satisfied. 

Note that
\begin{equation*} -a_1 y_1^k \equiv \sum_{i=2}^s a_i x_i^k \mmod p^{\gamma+m}.
\end{equation*}
Let $p^\alpha \| a_1$, and let $c$ be the multiplicative inverse of $-a_1p^{-\alpha}$ modulo $p^r$. Then
\begin{equation*} y_1^k \equiv cp^{-\alpha} \sum_{i=2}^s a_i x_i^k  \mmod p^\gamma,\end{equation*}
so $cp^{-\alpha} \sum_{i=2}^s a_i x_i^k$ is a $k$th power residue modulo $p^\gamma$, and hence modulo every power of $p$. Thus there exists $x_1$ such that 
\begin{equation*}
x_1^k \equiv  cp^{-\alpha} \sum_{i=2}^s a_i x_i^k  \mmod p^r,
\end{equation*}
which yields the congruence \eqref{pAdicEqn}. We conclude that $T(p)>0$ for every $p$, which shows that $\fG$ is a positive real number.
\end{proof}

Let $K= C \fG$ with $C$ as in \S \ref{CM}. The estimate \eqref{alt} and Lemma \ref{SSaverage} imply Theorem \ref{average}, since $T$ is a positive integer satisfying $B^\varepsilon \ll T \le B^\delta$.

\section{Second moment analysis}
\label{SMA}

We attack Proposition \ref{dream} by focusing on the singular series.

\begin{lemma} \label{attack} Assume equation \eqref{nilseq}. Then there exists a positive-valued arithmetic function $r(t)$, decreasing to 0, such that 
\begin{equation*} | \fG(t) - \fG | < r(t) \end{equation*}
for almost all positive integers $t$.
\end{lemma}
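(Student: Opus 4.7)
The plan is a routine Chebyshev-type argument together with a diagonal choice of $r(t)$.

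Writing $M(T) = T^{-1}\sum_{t \le T}(\fG(t) - \fG)^2$, the hypothesis \eqref{nilseq} says $M(T) \to 0$ as $T \to \infty$. By Chebyshev's inequality, for any $\eps > 0$ and any positive integer $T$,
\begin{equation*}
\#\{t \le T : |\fG(t) - \fG| \ge \eps\} \le \eps^{-2} T M(T).
\end{equation*}
To upgrade this uniform control into a pointwise, almost-everywhere statement, I would choose an increasing sequence of positive integers $T_1 < T_2 < T_3 < \cdots$ with the property that $M(T) \le n^{-4}$ for every $T \ge T_n$; such a sequence exists precisely because $M(T) \to 0$. Then define
\begin{equation*}
r(t) = 1/n \qquad (T_n \le t < T_{n+1}),
\end{equation*}
and set $r(t) = 1$ for $t < T_1$. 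The resulting $r$ is a positive-valued arithmetic function, weakly decreasing with $r(t) \to 0$ as $t \to \infty$.

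Let $\cE = \{t \in \bN : |\fG(t) - \fG| \ge r(t)\}$ denote the exceptional set. For any $T \in [T_N, T_{N+1})$, a positive integer $t \in \cE \cap [T_1, T]$ lies in some interval $[T_n, T_{n+1})$ with $1 \le n \le N$, and hence satisfies $|\fG(t) - \fG| \ge 1/n \ge 1/N$. Applying Chebyshev with $\eps = 1/N$ and invoking the bound $M(T) \le N^{-4}$, I obtain
\begin{equation*}
|\cE \cap [1, T]| \le T_1 + N^2 T M(T) \le T_1 + T/N^2,
\end{equation*}
so $|\cE \cap [1, T]|/T \to 0$ as $T \to \infty$ (since $N \to \infty$ with $T$). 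This shows that $\cE$ has density zero, which is the required conclusion.

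I expect no genuine obstacle in executing this plan: it is a standard Borel--Cantelli/Chebyshev argument, and the only freedom is in the rate at which the $T_n$ are chosen (the exponent $-4$ on $n$ is not sharp; any rate giving a summable density of exceptions suffices). The real content of \S\ref{SMA} lies in Theorem \ref{PosVar}, which shows that the hypothesis \eqref{nilseq} actually fails, so Proposition \ref{dream} is conditional on an assumption that turns out not to hold.
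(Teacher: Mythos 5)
Your argument is correct and is essentially identical in structure to the paper's proof: both define a step function $r(t)=1/n$ on intervals $[T_n,T_{n+1})$ whose endpoints are chosen so that the second moment has dropped below a fixed negative power of $n$, and then apply Chebyshev. The only cosmetic differences are the exponent ($n^{-4}$ vs.\ the paper's $m^{-3}$, which as you note is immaterial) and the fact that you enforce strict monotonicity of the $T_n$ directly, whereas the paper takes $\Upsilon(m)$ to be the least admissible threshold and then observes it tends to infinity.
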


\begin{proof} 
The result is trivial if $\fG(t) = \fG$ for all $t$, so assume otherwise. Let
\[ A_T = T^{-1} \sum_{t \le T} (\fG(t)-\fG)^2.
\]
Define $\Upsilon: \bN \to \bN$ by $\Upsilon(1) =1$ and, for $m \ge 2$,
\begin{equation*}
\Upsilon(m) = \min \{T_0 \in \bN: \text{if } T\ge T_0 \text{ then } A_T \le m^{-3} \}.
\end{equation*}
This is well defined, by our assumption \eqref{nilseq}, and $\Upsilon(m)$ is increasing to infinity, since $A_T$ is positive for large $T$. Define $r(t) = m^{-1}$ if $\Upsilon(m) \le t < \Upsilon(m+1)$.

Now $r(t)$ is a positive-valued arithmetic function, decreasing to 0. Moreover, 
\begin{equation*} T^{-1} \# \{t \le T: |\fG(t) - \fG| \ge r(t) \} \le  r(T)^{-2}A_T \to 0
\end{equation*}
as $T \to \infty$.
\end{proof}

By Lemma \ref{attack} and the bound \eqref{alt}, the following statement holds for almost all positive integers $t$:  if $B \ge t^{1/\delta}$ then
\begin{equation*}
 N(B,t)t^dB^{k-s} - K \ll  r(t) + t^{-\eps/\delta}.
\end{equation*}
Therefore there exists a positive constant $\fc$ such that Proposition \ref{dream} holds with 
$\rho(t) = \fc( r(t) +  t^{-\eps/\del})$.

Our final task is to establish Theorem \ref{PosVar}. Let
\begin{equation} \label{Udef}
U(q_1,q_2) = (q_1q_2)^{-1} \sum_{t \le q_1q_2} \fG(t,q_1)\fG(t,q_2)
\end{equation}
and
\begin{equation} \label{fCdef}
\fC = \sum_{q_1=1}^\infty \sum_{q_2=1}^\infty U(q_1,q_2).
\end{equation}
The bound \eqref{SingularBound1} ensures that the series \eqref{fCdef} converges absolutely, so $\fC \in \bC$. 

\begin{lemma} \label{VarConv}
The constant $\fC$ is a positive real number, and
\begin{equation} \label{VarConvEq}
T^{-1} \sum_{t \le T} \fG(t)^2 = \fC + O(T^{\eps-1}+  T^{\eps + 2 - (s-1)/k}).
\end{equation}
\end{lemma}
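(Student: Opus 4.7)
The plan is to mirror the proof of Lemma \ref{SSaverage} applied to the double series representation of $\fG(t)^2$, and then to deduce positivity of $\fC$ by Cauchy--Schwarz applied to Lemma \ref{SSaverage}. The hypothesis $s \ge 2^k + 1$ (or $s \ge 2k^2 - 2$) gives $(s-1)/k > 2$, so the bound \eqref{SingularBound1} implies that $\sum_q |\fG(t,q)| \ll 1$ uniformly in $t$, and hence the double series
\[
\fG(t)^2 = \sum_{q_1 = 1}^\infty \sum_{q_2 = 1}^\infty \fG(t,q_1) \fG(t,q_2)
\]
converges absolutely. After averaging over $t \le T$ and interchanging the order of summation, it suffices to analyse each $T^{-1} \sum_{t \le T} \fG(t,q_1) \fG(t,q_2)$ and sum the errors.

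The product $\fG(t,q_1) \fG(t,q_2)$ is periodic in $t$ with period dividing $Q := q_1 q_2$. Writing $r = r(Q,T)$ for the remainder of $T$ modulo $Q$, exactly the argument used in Lemma \ref{SSaverage} gives
\[
T^{-1} \sum_{t \le T} \fG(t,q_1) \fG(t,q_2) - U(q_1,q_2) \ll T^{-1} r \, Q^{1+(1-s)/k},
\]
on applying \eqref{SingularBound1} twice and noting that $r < Q$ when $Q \le T$ and $r = T$ when $Q > T$. Summing over the pairs $(q_1,q_2)$ with $q_1 q_2 \le T$, and using the divisor bound $\#\{(q_1,q_2) : q_1 q_2 = Q\} \le d(Q) \ll Q^\eps$ to collapse the double sum, the calculation \eqref{calculation} yields $O(T^{\eps - 1} + T^{\eps + 2 - (s-1)/k})$. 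The tail over $Q > T$ contributes
\[
\sum_{Q > T} d(Q) \, Q^{1 + (1-s)/k} \ll T^{\eps + 2 - (s-1)/k},
\]
since the underlying series converges. Combining these estimates gives \eqref{VarConvEq} and shows that $\fC = \lim_{T \to \infty} T^{-1} \sum_{t \le T} \fG(t)^2 \in \bR$.

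For positivity, the partial averages $T^{-1} \sum_{t \le T} \fG(t)^2$ are positive real numbers, since $\fG(t) > 0$ for every integer $t$ by the remark following Theorem \ref{IndividualAsymptotic}. Cauchy--Schwarz gives
\[
T^{-1} \sum_{t \le T} \fG(t)^2 \; \ge \; \Big( T^{-1} \sum_{t \le T} \fG(t) \Big)^2 \; \longrightarrow \; \fG^2 > 0
\]
as $T \to \infty$, by Lemma \ref{SSaverage} and the positivity of $\fG$. Passing to the limit on the left yields $\fC \ge \fG^2 > 0$.

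The only real obstacle is bookkeeping: replacing the single modulus $q$ of Lemma \ref{SSaverage} by the product $Q = q_1 q_2$ inflates the relevant divisor counts by $d(Q) \ll Q^\eps$, but this is absorbed harmlessly into the $\eps$ in the final error term because $(s-1)/k > 2$ sits comfortably above the critical value needed for convergence.
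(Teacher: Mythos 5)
Your proof is correct and follows essentially the same route as the paper: periodicity modulo $q_1q_2$ reduces the average to $U(q_1,q_2)$ plus an error governed by the remainder $r(q_1q_2,T)$, and the sums over $q_1q_2\le T$ and $q_1q_2>T$ are handled by invoking \eqref{calculation} after absorbing the $\ll Q^\eps$ divisor-count multiplicity. The only cosmetic differences are that the paper splits the tail $q_1q_2>T$ explicitly into $P_1+P_2$ rather than collapsing it via the divisor bound, and that your Cauchy--Schwarz argument for $\fC\ge\fG^2>0$ is the same computation the paper packages as the nonnegativity of $T^{-1}\sum_{t\le T}(\fG(t)-\fG)^2$ in \eqref{view}, which it then reuses in the proof of Theorem~\ref{PosVar}.
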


\begin{proof}
We begin by establishing the asymptotic formula \eqref{VarConvEq}. Our argument is similar to that used in the proof of Lemma \ref{SSaverage} to establish the asymptotic formula \eqref{SSaverageEq}.  Write
\begin{equation*} 
\fG_{t,q_1,q_2} = \fG(t,q_1)\fG(t,q_2).
\end{equation*}
Recall equations \eqref{SqaDef} and \eqref{GtqDef}, and recall that $T$ is a positive integer. For given positive integers $q_1$ and $q_2$, let $r=r(q_1,q_2,T)$ denote the remainder when $T$ is divided by $q_1q_2$. By periodicity modulo $q_1q_2$, we have
\begin{equation*}
\sum_{t=r+1}^T \fG_{t,q_1,q_2} = \frac{T-r}{q_1q_2} \sum_{t=1}^{q_1q_2}\fG_{t,q_1,q_2}.
\end{equation*}
Recalling equation \eqref{Gt}, we now have
\begin{align*}
T^{-1} \sum_{t \le T} \fG(t)^2 - \fC 
&=
\sum_{q_1=1}^\infty\sum_{q_2=1}^\infty  T^{-1} \Big(
\sum_{t\le T} \fG_{t,q_1,q_2}- \frac T{q_1q_2} \sum_{t=1}^{q_1q_2} \fG_{t,q_1,q_2}
\Big)
\\
&= \sum_{q_1=1}^\infty\sum_{q_2=1}^\infty  T^{-1} \Big(
\sum_{t=1}^r \fG_{t,q_1,q_2}- \frac r{q_1q_2} \sum_{t=1}^{q_1q_2} \fG_{t,q_1,q_2}
\Big). \end{align*}
On noting that $r(q_1,q_2,T) = T$ when $q_1q_2 >T$, the inequality \eqref{SingularBound1} now gives
\begin{align} \notag
T^{-1} \sum_{t \le T} \fG(t)^2 - \fC  \ll & \: T^{-1}\sum_{q_1q_2\le T} r(q_1q_2)^{1+(1-s)/k} 
\\ 	\label{pot}
&+ \sum_{q_1q_2>T} (q_1q_2)^{1+(1-s)/k}.
\end{align}

Using the calculation \eqref{calculation} with $q=q_1q_2$, we have
\begin{equation} \label{firstly}
\sum_{q_1q_2\le T} r(q_1q_2)^{1+(1-s)/k} \ll T^\eps \sum_{q \le T}r q^{1+(1-s)/k} \ll T^\eps + 
T^{\eps + 3 - (s-1)/k}. \end{equation}
Furthermore,
\begin{equation}  \label{secondly} 
\sum_{q_1q_2>T} (q_1q_2)^{1+(1-s)/k} = P_1+ P_2,
\end{equation}
where
\begin{equation} \label{P_1} P_1 = \sum_{q_1 >T} q_1^{1+(1-s)/k} \sum_{q_2 =1}^\infty q_2^{1+(1-s)/k}
\ll T^{2+(1-s)/k} \end{equation}
and
\begin{align} \notag P_2 &= \sum_{ q_1 \le T} q_1^{1+(1-s)/k} \sum_{q_2 > T/q_1} q_2^{1+(1-s)/k} \\
& \label{P_2}
\ll   \sum_{q_1 \le T}  q_1^{1+(1-s)/k}  (T/q_1)^{2+(1-s)/k} \ll T^{\eps+2+(1-s)/k}.
\end{align}

Considering equation \eqref{secondly} and the inequalities \eqref{pot}, \eqref{firstly}, \eqref{P_1} and \eqref{P_2} yields equation \eqref{VarConvEq}. In particular $T^{-1} \sum_{t \le T} \fG(t)^2$ converges to $\fC$ as $T \to \infty$ so, 
by Lemma \ref{SSaverage},
\begin{equation} \label{view}  0 \le T^{-1} \sum_{t\le T} (\fG(t)-\fG)^2 \to \fC - \fG^2, \end{equation}
so $\fC$ is a real number with $\fC \ge \fG^2 > 0$.
\end{proof}

In view of equation \eqref{view}, in order to prove Theorem \ref{PosVar} it remains to show that $\fC > \fG^2$. For positive integers $t$ and $q$, let $M_t(q)$ count solutions $\mathbf{x}$ modulo $q$ to the congruence \eqref{ModEqn}. Then 
\begin{equation}\label{SumMt} M(q) = \sum_{t=1}^q M_t(q).\end{equation}
The Chinese remainder theorem shows that $M_t(\cdot)$ is multiplicative for any given $t$. For positive integers $t$ and prime numbers $p$, define
\begin{equation} \label{tSeries} 
T_t(p) = \sum_{h=0}^\infty \fG(t,p^h).
\end{equation}
The classical theory in \cite[Chapters 5 and 8]{Dav2005} tells us that $T_t(p)$ is a positive real number, that
\begin{equation} \label{MtNote} p^{r(1-s)}M_t(p^r)  = \sum_{h=0}^r \fG(t,p^h), \end{equation}
and that 
\begin{equation} \label{tLimit} T_t(p) = \lim_{r \to \infty} p^{r(1-s)}M_t(p^r). \end{equation}

For positive integers $q_1$ and $q_2$, let $M(q_1,q_2)$ count integer solutions to the system
\begin{equation*}
\sum_{i=1}^s h_i(t) x_i^k \equiv 0 \mmod q_1, \qquad \sum_{i=1}^s h_i(t) y_i^k \equiv 0 \mmod q_2
\end{equation*}
with $1 \le t \le q_1q_2$, $1 \le x_1, \ldots, x_s \le q_1$ and $1 \le y_1, \ldots, y_s \le q_2$. By orthogonality,
\begin{equation*}
M(q_1,q_2)= (q_1q_2)^{-1}\sum_{r_1,r_2,\mathbf{x}, \mathbf{y},t} \prod_{i=1}^s e( q_1^{-1}r_1h_i(t)x_i^k+q_2^{-1}r_2h_i(t)y_i^k) ,\end{equation*}
where the summation is over $1 \le r_1, x_1,\ldots,x_s \le q_1$, $1 \le r_2, y_1,\ldots,y_s \le q_2$ and $1 \le t \le q_1q_2$.  Recall equations \eqref{SqaDef}, \eqref{GtqDef} and \eqref{Udef}. By writing $r_i/q_i = a_i/d_i$ with $d_i >0$ and $(a_i,d_i)=1$ for $i=1,2$, we can now deduce that
\begin{equation} \label{DoubleCong} (q_1q_2)^{-s} M(q_1,q_2) = \sum_{d_1| q_1} \sum_{d_2| q_2} U(d_1,d_2).\end{equation}

By considering the underlying congruences, we observe that
\begin{equation} \label{obs} M(q,q) = q\sum_{t=1}^q M_t(q)^2 \end{equation}
for all $q$. The multiplicativity of $q \mapsto M(q,q)$ is inherited from that of $M_t(\cdot)$, for if $(u,v)=1$ then
\begin{align*} M(uv,uv) &= uv \sum_{t \mmod uv} M_t(u)^2 M_t(v)^2  \\
&= uv \sum_{a=1}^u \sum_{b=1}^v M_{av+bu}(u)^2 M_{av+bu}(v)^2 = M(u,u)M(v,v).
\end{align*}

For each prime $p$, let
\begin{equation} \label{ChiDef}
\chi_p =  \sum_{a=0}^\infty \sum_{b=0}^\infty U(p^a,p^b). 
\end{equation}
We showed that the series \eqref{fCdef} converges absolutely so, \emph{a fortiori}, the series \eqref{ChiDef} converges absolutely. Equation \eqref{DoubleCong} yields 
\[ p^{-2rs}M(p^r,p^r) = \sum_{a=0}^r \sum_{b=0}^r U(p^a,p^b), \] so
\begin{equation} \label{ChiLim}
\chi_p = \lim_{r \to \infty} p^{-2rs}M(p^r,p^r).
\end{equation}
The identities \eqref{TpAlt}, \eqref{SumMt}, \eqref{obs} and \eqref{ChiLim}, together with Cauchy's inequality, yield
\begin{equation} \label{densities2v1}
\chi_p \ge T(p)^2.
\end{equation}

\begin{lemma} We have
\begin{equation} \label{DensityProduct} \fC = \prod_p \chi_p.\end{equation}
\end{lemma}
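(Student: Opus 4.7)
The plan is to establish directly that $U$ is multiplicative in the bi-index $(q_1,q_2)$, so that the Euler product follows from the absolute convergence of $\sum_{q_1,q_2} U(q_1,q_2)$ --- which we already have, since \eqref{SingularBound1} gives $|U(q_1,q_2)| \ll (q_1q_2)^{1+(1-s)/k}$ and our hypothesis $s \ge 2k^2-2 > 2k+1$ makes the double series convergent.

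\textbf{Step 1 (multiplicativity of $\fG(t,\cdot)$ for each fixed $t$).} For coprime $u,v$ with $q = uv$, parametrize $a \bmod q$ by CRT as $a \equiv a_1 v\overline{v} + a_2 u\overline{u} \pmod{q}$ with $a_i \bmod q_i$ coprime to $q_i$ (where $v\overline{v}\equiv 1 \pmod{u}$ and $u\overline{u}\equiv 1 \pmod{v}$), and use the classical factorization $q^{-1}S(q,c) = u^{-1}S(u,c\overline{v}) \cdot v^{-1}S(v,c\overline{u})$ inside the definition \eqref{GtqDef} to obtain $\fG(t,uv) = \fG(t,u)\fG(t,v)$. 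This is exactly the reasoning of \cite[Lemma 2.11]{Vau1997}, except applied before averaging in $t$.

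\textbf{Step 2 (multiplicativity of $U$).} Suppose $q_1 = u_1v_1$, $q_2 = u_2v_2$ with $\gcd(u_1u_2,\,v_1v_2)=1$. Then $q_1q_2 = (u_1u_2)(v_1v_2)$ splits into coprime factors, and CRT gives a bijection between $t \bmod q_1q_2$ and pairs $(s\bmod u_1u_2,\; r\bmod v_1v_2)$. Since $\fG(t,u_i)$ depends on $t$ only mod $u_i \mid u_1u_2$, we have $\fG(t,u_i)=\fG(s,u_i)$, and similarly $\fG(t,v_i)=\fG(r,v_i)$. Step 1 therefore gives $\fG(t,q_i) = \fG(s,u_i)\fG(r,v_i)$. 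Substituting into \eqref{Udef} and separating the $s$- and $r$-sums yields $U(q_1,q_2) = U(u_1,u_2)\,U(v_1,v_2)$. Iterating over the prime factorizations $q_1 = \prod_p p^{a_p}$, $q_2 = \prod_p p^{b_p}$ produces
\[ U(q_1,q_2) = \prod_p U(p^{a_p}, p^{b_p}). \]

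\textbf{Step 3 (assembling the Euler product).} The tail bound on $|U(p^a,p^b)|$ shows $|\chi_p-1| \ll p^{1+(1-s)/k}$, so $\prod_p \chi_p$ converges absolutely. The double sum $\fC = \sum_{q_1,q_2} U(q_1,q_2)$ is absolutely convergent, so may be rearranged; grouping terms by the prime decompositions of $q_1$ and $q_2$ and applying Step 2 gives
\[ \fC = \sum_{q_1,q_2} \prod_p U(p^{v_p(q_1)},\,p^{v_p(q_2)}) = \prod_p \sum_{a,b \ge 0} U(p^a,p^b) = \prod_p \chi_p. \]

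The main obstacle is Step 2: one must track the interplay of two CRT decompositions (one on the inner $a$-variables hidden in $\fG(t,q_i)$, one on the outer $t$-variable), and verify that they are compatible. Everything else is bookkeeping once the multiplicative structure is in place.
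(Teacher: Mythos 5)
Your proof is correct, and it takes a route that differs from the paper's at the level of which multiplicative structure is exploited. The paper does not establish bi-index multiplicativity of $U$; instead it works with the diagonal congruence-count $M(q,q)$, proves $q \mapsto M(q,q)$ multiplicative via the Chinese Remainder Theorem (using \eqref{obs} and the multiplicativity of $M_t(\cdot)$), and then passes through the Dirichlet-convolution identity \eqref{DoubleCong}, writing $P^{-2rs}M(P^r,P^r) = \sum_{d_1,d_2\mid P^r}U(d_1,d_2)$ with $P = \prod_{p\le R}p$, so that $\prod_{p\le R}\chi_p = \sum_{d_1,d_2\in\cS(R)}U(d_1,d_2)$ after sending $r\to\infty$; the tail estimate $\ll R^{2+(1-s)/k}$ then closes the argument. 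You instead prove directly that $U$ itself is multiplicative in the pair $(q_1,q_2)$, using the CRT on the outer $t$-sum combined with the classical factorisation $\fG(t,uv) = \fG(t,u)\fG(t,v)$ inherited from Gauss sums; the Euler product then follows from the absolutely convergent double series by the usual rearrangement. Your Step 2 is carefully set up — crucially, you only assume $\gcd(u_1u_2, v_1v_2)=1$, not $\gcd(u_1,u_2)=1$, which is exactly what is needed when $u_1,u_2$ are powers of the same prime — and the verification that $\fG(t,u_i)=\fG(s,u_i)$ for $t\equiv s\pmod{u_1u_2}$ is correct because $\fG(\cdot,q)$ has period $q$ in its first argument. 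The only small omission is that you should note $U(1,1)=1$ (which is immediate since $\fG(t,1)=1$), so that the products $\prod_p U(p^{v_p(q_1)},p^{v_p(q_2)})$ have all but finitely many factors equal to $1$ and the rearrangement in Step 3 is legitimate. The trade-off: your argument proves a cleaner and stronger intermediate statement (full bi-index multiplicativity of $U$) entirely on the exponential-sum side, while the paper's proof is slightly shorter because it recycles the counting function $M(q_1,q_2)$, the identity \eqref{DoubleCong}, and the multiplicativity of $M(q,q)$, all of which it has already set up for other purposes (notably Lemma \ref{striker}).
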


\begin{proof} Let $R \ge 2$ be a real number, and put 
\[ P = \prod_{p \le R}p.\] The multiplicativity of $q \mapsto M(q,q)$ and the identity \eqref{DoubleCong} yield
\[ \prod_{p \le R} p^{-2rs} M(p^r, p^r) = P^{-2rs}M(P^r,P^r) = \sum_{d_1, d_2 | P^r} U(d_1,d_2),\]
and now equations \eqref{ChiLim} gives
\begin{equation*} \prod_{p \le R} \chi_p = \sum_{d_1,d_2 \in \cS(R)}U(d_1,d_2),
\end{equation*}
where $\cS(R) = \{ n \in \bN: \text{ if } p|n \text{ then } p \le R \}$ is the set of $R$-smooth numbers. Recalling the definitions \eqref{Udef} and \eqref{fCdef}, as well as the inequality \eqref{SingularBound1}, we now see that
\begin{align*} \fC - \prod_{p \le R} \chi_p &= \sum_{(d_1,d_2) \notin \cS(R)^2}  U(d_1,d_2) 
\ll \sum_{(d_1,d_2) \notin \cS(R)^2} (d_1d_2)^{1+(1-s)/k} \\
&\le \sum_{d_1 > R \text{ or } d_2 > R} (d_1d_2)^{1+(1-s)/k} 
 \ll R^{2+(1-s)/k}
\end{align*}
converges to zero as $R \to \infty$.
\end{proof}

It remains to show that $\chi_p > T(p)^2$ for some prime $p$. Then equations \eqref{EulerProd} and \eqref{DensityProduct}, together with the inequality \eqref{densities2v1}, will give $\fC > \fG^2$, which will thereby complete the proof of Theorem \ref{PosVar}.

\begin{lemma} \label{striker} We have
\begin{equation} \label{strikerEq} \chi_p - T(p)^2 = \frac12 \lim_{r \to \infty} p^{-2r} \sum_{1 \le u,v \le p^r} 
(T_u(p)-T_v(p))^2.
\end{equation}
\end{lemma}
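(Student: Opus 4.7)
The plan is to realise $\chi_p - T(p)^2$ as a population variance at level $p^r$, invoke the elementary identity expressing a variance as a mean-squared pairwise difference, and then pass to the limit.

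Concretely, I would set $a_{t,r} = p^{r(1-s)} M_t(p^r)$ for $1 \le t \le p^r$, so that by \eqref{MtNote} we have $a_{t,r} = \sum_{h=0}^r \fG(t,p^h)$. Combining \eqref{SumMt} and \eqref{TpAlt} gives $p^{-r} \sum_{t=1}^{p^r} a_{t,r} \to T(p)$, while \eqref{obs} together with \eqref{ChiLim} gives $p^{-r} \sum_{t=1}^{p^r} a_{t,r}^2 = p^{-2rs} M(p^r,p^r) \to \chi_p$. The standard identity
\begin{equation*}
\frac{1}{n} \sum_{i=1}^n x_i^2 - \left( \frac{1}{n} \sum_{i=1}^n x_i \right)^2 = \frac{1}{2n^2} \sum_{i,j=1}^n (x_i - x_j)^2,
\end{equation*}
applied with $n = p^r$ and $x_t = a_{t,r}$, then yields
\begin{equation*}
p^{-r} \sum_{t=1}^{p^r} a_{t,r}^2 - \left( p^{-r} \sum_{t=1}^{p^r} a_{t,r} \right)^2 = \frac{1}{2 p^{2r}} \sum_{u,v=1}^{p^r} (a_{u,r} - a_{v,r})^2.
\end{equation*}
Taking $r \to \infty$ in the LHS gives $\chi_p - T(p)^2$.

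It remains to show that, as $r \to \infty$, the RHS has the same limit as $\tfrac{1}{2}p^{-2r} \sum_{u,v} (T_u(p) - T_v(p))^2$. This is where the one genuine estimate is required: by \eqref{SingularBound1} we have $\fG(t,p^h) \ll p^{h(1 + (1-s)/k)}$ uniformly in $t$, and our hypotheses ($s \ge 2^k+1$, or $s \ge 2k^2-2$, with $k \ge 3$) ensure that $1 + (1-s)/k < 0$. Consequently, by \eqref{tSeries}, the tail
\begin{equation*}
T_t(p) - a_{t,r} = \sum_{h=r+1}^\infty \fG(t,p^h) \ll p^{(r+1)(1+(1-s)/k)}
\end{equation*}
tends to $0$ geometrically as $r \to \infty$, uniformly in $t$. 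At the same time both $a_{t,r}$ and $T_t(p)$ are bounded uniformly in $t$ and $r$.

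Thus $(a_{u,r} - a_{v,r})^2 - (T_u(p) - T_v(p))^2$ is the product of a uniformly bounded factor and a factor of size $O(\max_t |T_t(p) - a_{t,r}|) \to 0$. Averaging this over the $p^{2r}$ pairs $(u,v)$ produces an error that still vanishes, so
\begin{equation*}
\chi_p - T(p)^2 = \lim_{r \to \infty} \frac{1}{2 p^{2r}} \sum_{u,v=1}^{p^r} (a_{u,r} - a_{v,r})^2 = \frac{1}{2} \lim_{r \to \infty} p^{-2r} \sum_{1 \le u, v \le p^r} (T_u(p) - T_v(p))^2,
\end{equation*}
which is \eqref{strikerEq}. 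The only potentially delicate point is the uniformity in $t$ of the tail estimate for $T_t(p) - a_{t,r}$, and this is handed to us directly by the bound \eqref{SingularBound1}, which already holds uniformly in $t$.
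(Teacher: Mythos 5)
Your proposal is correct and follows essentially the same route as the paper: express $\chi_p$ and $T(p)$ as limits of first and second moments of $p^{r(1-s)}M_t(p^r)$ over $t \le p^r$ (via \eqref{TpAlt}, \eqref{SumMt}, \eqref{obs}, \eqref{ChiLim}), rewrite the variance at level $r$ as a mean-squared pairwise difference, and then replace the truncated quantity $a_{t,r}$ by its limit $T_t(p)$ using the geometrically decaying, $t$-uniform tail bound from \eqref{SingularBound1} together with the uniform boundedness of $T_t(p)$. The only cosmetic difference is that you introduce the shorthand $a_{t,r}$ and state the variance identity explicitly, whereas the paper works directly with $M_t(p^r)$ and the normalising power $p^{-2rs}$.
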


\begin{proof} The identities \eqref{TpAlt}, \eqref{SumMt}, \eqref{obs} and \eqref{ChiLim} give
\begin{align} 
\notag \chi_p - T(p)^2
&= \lim_{r \to \infty} p^{-2rs} \Big( p^r \sum_{t \le p^r }M_t(p^r)^2 -  \Big(\sum_{t \le p^r} M_t(p^r)\Big)^2 \Big) \\
\label{FirstBit} &=  \frac12 \lim_{r \to \infty} p^{-2rs}  \sum_{1 \le u,v \le p^r} (M_u(p^r) - M_v(p^r))^2.
\end{align}
For any $t$ and $r$, equations \eqref{tSeries} and \eqref{MtNote}, together with the inequality \eqref{SingularBound1}, yield
\[
T_t(p) - p^{r(1-s)}M_t(p^r) = \sum_{h>r}\fG(t,p^h) \ll 
p^{(1+(1-s)/k)(r+1)}.
\]
The definition \eqref{tSeries} and the bound \eqref{SingularBound1} show that, moreover, 
\[ T_t(p) \ll 1 \]
uniformly in $t$. Hence
\[ (T_u(p)-T_v(p))^2 -  p^{2r(1-s)} (M_u(p^r) - M_v(p^r))^2 \ll p^{(1+(1-s)/k)(r+1)}.\]
This equates the right hand sides of equations \eqref{strikerEq} and \eqref{FirstBit}.
\end{proof}

\begin{lemma} \label{goal}
Suppose there exist $p$, $v$ and $\kappa = \kappa(p) > 0$ such that if $a \equiv 1 \mmod p$ and $b \equiv v \mmod p^2$ then $|T_a(p) - T_b(p)| \ge \kappa$. Then Theorem \ref{PosVar} holds.
\end{lemma}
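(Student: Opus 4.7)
The goal is to deduce $\fC > \fG^2$; combined with \eqref{view} this yields Theorem \ref{PosVar}. By the Euler product \eqref{EulerProd}, the factorisation \eqref{DensityProduct}, and the term-by-term inequality \eqref{densities2v1}, it is enough to exhibit a single prime $p$ for which $\chi_p > T(p)^2$. The hypothesis of the lemma is tailor-made to provide such a prime via Lemma \ref{striker}.

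To avoid a notational clash with the summation indices of \eqref{strikerEq}, I would rename the given residue class and write the hypothesis as: there exist a prime $p$, a residue class $v_0 \bmod p^2$, and $\kappa > 0$, such that $|T_a(p) - T_b(p)| \ge \kappa$ whenever $a \equiv 1 \pmod{p}$ and $b \equiv v_0 \pmod{p^2}$. Applying Lemma \ref{striker} and restricting the sum on the right-hand side of \eqref{strikerEq} to those ordered pairs $(u,w)$ with $1 \le u, w \le p^r$, $u \equiv 1 \pmod p$, and $w \equiv v_0 \pmod{p^2}$, I count $p^{r-1}$ choices for $u$ and $p^{r-2}$ choices for $w$ (for $r \ge 2$). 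Each such pair contributes at least $\kappa^2$ to the sum, and all remaining terms are non-negative, so
\[
\chi_p - T(p)^2 \;\ge\; \tfrac{1}{2} \lim_{r \to \infty} p^{-2r} \cdot \kappa^2 \cdot p^{r-1} \cdot p^{r-2} \;=\; \frac{\kappa^2}{2p^3} \;>\; 0.
\]

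Thus $\chi_p > T(p)^2$, which by the chain of reductions above delivers $\fC > \fG^2$ and hence Theorem \ref{PosVar}. There is no serious obstacle in the deduction itself; it is a bookkeeping step extracting a positive-density subcollection of pairs from the sum in \eqref{strikerEq}. The genuine difficulty lies not here but in verifying the hypothesis, which is presumably the role of the subsequent $p$-adic lifting arguments and the Chebotarev-type result (Lemma \ref{IrredPoly}) flagged in the introduction.
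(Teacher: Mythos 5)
Your proposal is correct and follows essentially the same route as the paper: apply Lemma \ref{striker}, restrict the sum to the $p^{r-1}\cdot p^{r-2}$ pairs $(u,w)$ with $u\equiv 1\pmod p$ and $w\equiv v\pmod{p^2}$, each contributing at least $\kappa^2$, and conclude $\chi_p - T(p)^2 \ge \kappa^2/(2p^3) > 0$. The paper likewise then combines this with \eqref{EulerProd}, \eqref{DensityProduct}, \eqref{densities2v1} and \eqref{view} to obtain $\fC > \fG^2$ and hence Theorem \ref{PosVar}.
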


\begin{proof} By Lemma \ref{striker}, 
\[ \chi_p - T(p)^2 
\ge \frac12 \lim_{r \to \infty}p^{r-1}p^{r-2} \kappa^2 / p^{2r} = \frac12 \kappa^2 /p^3  > 0. \]
\end{proof}

Our methods allow us to compute the $p$-adic density $T_t(p)$ whenever $p$ divides no more than one of $h_1(t), \ldots, h_s(t)$; the argument needs to be modified slightly if $p^k$ divides some $h_i(t)$. We do not require such generality, so we specialise for simplicity. We choose $p$ and $v$ via the following lemma.

\begin{lemma}\label{IrredPoly} Let $f$ be a nonconstant polynomial with integer coefficients, irreducible in $\bQ[x]$. Then there exist infinitely many primes $p$ such that $f$ has a root modulo $p$ that is not a root modulo $p^2$. 
\end{lemma}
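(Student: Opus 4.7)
\emph{Plan.} The argument combines Schur's classical theorem — any nonconstant integer polynomial has roots modulo infinitely many primes — with a Hensel-style local computation powered by a Bezout identity between $f$ and $f'$.

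Since $f$ is irreducible in $\mathbb{Q}[x]$ and $\mathrm{char}\,\mathbb{Q}=0$, $f$ is separable, so $\gcd(f,f')=1$ in $\mathbb{Q}[x]$. Running the Euclidean algorithm in $\mathbb{Q}[x]$ and clearing denominators, exactly as in the proof of Lemma \ref{CoprimePoly}, yields $u,w\in\mathbb{Z}[x]$ and a nonzero integer $D$ with $u(x)f(x)+w(x)f'(x)=D$. Consequently, whenever $p\nmid D$, no integer $a$ satisfies both $p\mid f(a)$ and $p\mid f'(a)$; equivalently, every root of $f$ modulo such a $p$ is simple. By Schur's theorem, the set of primes dividing some value $f(n)$ with $n\in\mathbb{Z}$ is infinite (alternatively, Chebotarev density applied to the splitting field of $f$ gives a positive density of such primes), so after discarding the finitely many primes that divide $D$ one obtains an infinite collection of primes $p$, each admitting a simple root $a$ of $f$ modulo $p$.

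For each such pair $(p,a)$, if $f(a)\not\equiv 0\mmod{p^2}$ we are finished; otherwise replace $a$ by $a+p$ and observe that
\[ f(a+p) \equiv f(a)+p f'(a) \equiv p f'(a) \mmod{p^2}. \]
Since $p\nmid f'(a)$, the right-hand side is a nonzero residue modulo $p^2$, while $f(a+p)\equiv 0\mmod{p}$ still holds; thus $a+p$ is a root modulo $p$ that is not a root modulo $p^2$. No step presents a serious obstacle. The essential role of irreducibility is in producing the Bezout identity: without separability the conclusion can genuinely fail, as $f(x)=x^2$ illustrates, since every root of $x^2$ modulo $p$ is $\equiv 0\mmod{p}$ and is automatically a root modulo $p^2$.
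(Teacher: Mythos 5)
Your proof is correct and takes a genuinely different route from the paper's. The paper first reduces to the monic case, then invokes the Frobenius density theorem to obtain a positive density of primes $p$ for which $f$ splits into distinct linear factors over $\bF_p$ (with $p$ not dividing the discriminant), Hensel-lifts the roots to $\bZ_p$, and finally perturbs one lifted root by $p$ to force exact divisibility. You instead extract a Bezout identity $u f + w f' = D$ over $\bZ[x]$ from separability, invoke Schur's elementary theorem to get infinitely many primes dividing values of $f$, and for any such $p \nmid D$ do the perturbation $a \mapsto a+p$ directly in $\bZ/p^2\bZ$ using the truncated Taylor expansion $f(a+p) \equiv f(a) + p f'(a) \mmod{p^2}$. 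Your argument replaces the deep input (Chebotarev/Frobenius) with an elementary one (Schur), avoids passing to $\bZ_p$, and in fact only uses separability rather than full irreducibility — so it quietly proves a mild generalization of the lemma. What the paper's route buys in exchange is quantitative: the Frobenius density theorem delivers a positive density of usable primes (indeed ones where $f$ splits completely), which is more information than bare infinitude; that extra strength is not needed for the proof of Theorem \ref{PosVar}, where a single large prime suffices, so your more economical argument serves equally well here.
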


\begin{proof}
First we establish that $f$ can be assumed to be monic. Put $d = \deg f$, and let $a$ be the leading coefficient of $f$. Now $a^{d-1}f$ is a monic irreducible polynomial in $y =ax$, with integer coefficients. For large $p$ we know that $p$ does not divide $a$, so applying the result for this polynomial yields the desired outcome.

With $f$ monic, the Frobenius density theorem (see \cite{Fro1896} or \cite{Sur2003}) yields infinitely many primes $p$ for which there exist $a_1,\ldots,a_d \in \bF_p$ such that
\begin{equation*}
f(x) = (x-a_1) \cdots (x-a_d) \in \bF_p[x], 
\end{equation*}
and there are infinitely many such $p$ that do not divide the discriminant $\Delta$ of $f$. Given such a prime $p$, the discriminant is nonzero in $\bF_p$, so $a_i \ne a_j$ whenever $i \ne j$. Now Hensel's lemma allows us to lift the $a_i$ to roots $\tilde{a_i} \in \bZ_p$ of $f$. Thus
\begin{equation*}
f(x) = (x-\tilde{a_1}) \cdots (x-\tilde{a_d}) \in \bZ_p[x],
\end{equation*}
and $\tilde {a_i} \not \equiv \tilde {a_j}$ modulo $p$ whenever $i \ne j$. Put $\tilde v = p+ \tilde{a_1} \in \bZ_p/p^2 \bZ_p$, so that 
\begin{equation*} f( \tilde v) = p \prod_{i=2}^d (p+ \tilde a_1 - \tilde a_i)
\end{equation*}
is exactly divisible by $p$. Now $p \| f(v)$, where $v$ is any integer representing the image of $\tilde v$ under the isomorphism $\bZ_p / p^2 \bZ_p \to \bZ / p^2 \bZ$.
\end{proof}

By Lemma \ref{IrredPoly} we can choose an arbitrarily large prime $p$ and an integer $v$ such that $p \| h_s(v)$. Since $p$ is large it does not divide $h_i(1)$ for any $i$, nor does it divide $k$, and by the inequality \eqref{PolyGCDbound} it does not divide $h_i(v)$ for $i=1,2,\ldots,s-1$. Let $a \equiv 1 \mmod p$ and $b \equiv v \mmod p^2$. 

Put $g = (p-1,k)$, let $t$ be an integer, let $r$ be a large positive integer, and write $a_i = h_i(t)$ for $i=1,2,\ldots,s$; we will consider $t=a$ and $t=b$. Recall equation \eqref{tLimit}, and that $M_t(p^r)$ counts integer solutions to
\begin{equation} \label{MainCong}
a_1x_1^k + \ldots + a_s x_s^k \equiv 0 \mmod p^r
\end{equation}
with $1 \le x_1, \ldots, x_s \le p^r$. Since $\gamma = 1$ in equation \eqref{GammaDef}, a \emph{$k$th power residue} will mean modulo $p$, or equivalently modulo $p^y$ for any positive integer $y$.

\subsection*{Computation of $T_a(p)$}

In this case $p$ does not divide $a_i$ for any $i$. For a given solution $\mathbf{x}$, let $m$ be maximal such that $p^m | x_i$ for all $i$, and let $I$ denote the nonempty subset
\begin{equation*} I = \{ i: p^m \| x_i \}. \end{equation*}
We count solutions with a specific $m$ and $I$, in order to later sum the contributions from all $m$ and $I$. We only need to consider $m < r/k$, for the number of solutions with $p^r | x_s^k$ is $o(p^{r(s-1)})$ as $r \to \infty$. 

For $i \in I$ write $x_i = p^m y_i$. Our congruence \eqref{MainCong} becomes
\begin{equation*} 
 \sum_{i \in I} a_i y_i^k + \sum_{j \notin I} a_j(x_j/p^m)^k    \equiv 0 \mmod p^{r-mk}
\end{equation*}
with the $y_i$ modulo $p^{r-m}$ not divisible by $p$ and the $x_j$ modulo $p^r$ divisible by $p^{m+1}$ for $j \notin I$. Fix $i_0 \in I$ and put $I^* = I \setminus \{i_0\}$. Note that 
\[ g = (\varphi(p^{r-mk}),k),\]
where $\varphi$ is Euler's totient function. Choosing $y_i$ for $i \in I^*$ and $x_j$ for $j \notin I$ determines $g$ solutions $y_{i_0}$ if
\begin{equation*}
-a_{i_0}^{-1} \sum_{i \in I^*} a_i y_i^k
\end{equation*}
is a $k$th power residue and no solutions otherwise. Let $A_I$ count solutions to
\begin{equation*} \sum_{i \in I} a_i z_i^k \equiv 0 \mmod p, \qquad z_i \in (\bZ / p \bZ)^\times.
\end{equation*}
For any choice of $(x_j)_{j \notin I}$ there are $A_I/g$ choices for $(y_i \in (\bZ / p \bZ)^\times)_{i \in I^*}$ yielding solutions $y_{i_0}$, and these lift to $g^{-1}A_I (p^{r-m-1})^{\# I-1}$ choices for $(y_i \mmod p^{r-m})_{i \in I^*}$. 

There are $p^{r-m-1}$ possible $x_j$ for each $j \notin I$, so the total number of solutions with $m < r/k$ is thus
\begin{equation*}
\sum_{ \emptyset \ne I \subseteq\{1,\ldots,s\}} \: \sum_{0 \le m < r/k} (p^{r-m-1})^{s-1} A_I.
\end{equation*}
Hence, by equation \eqref{tLimit},
\begin{align*}
T_a(p) &= \sum_{ \emptyset \ne I \subseteq\{1,\ldots,s\}} A_I \sum_{m=0}^\infty (p^{1-s})^{m+1} = 
\frac{\sum_{ \emptyset \ne I \subseteq\{1,\ldots,s\}} A_I } {p^{s-1}-1} \\
&= \frac{A-1} {p^{s-1}-1},
\end{align*}
where $A$ is the number of solutions $w_1,\ldots,w_s$ modulo $p$ to the congruence
\begin{equation*} \sum_{i=1}^s a_i w_i^k \equiv 0 \mmod p.\end{equation*}
We have simplified $T_a(p)$ sufficiently for our purposes, but note that $A$ and hence $T_a(p)$ is easily computable in any specific instance. 

\subsection*{Computation of $T_b(p)$}

This is similar to the previous calculation. In this case $p \| a_s$, and $p$ does not divide $a_i$ for $i=1,2,\ldots,s-1$. Let $m < r/k$ be maximal such that $p^m | x_i$ for $i=1,2,\ldots,s-1$. Note that $p^m$ necessarily divides $x_s$, since $p^2$ does not divide $a_s$. Now put
\begin{equation*} I = \{ i \in \{1,\ldots, s-1 \}: p^m \| x_i \} \end{equation*}
and $J = \{1,\ldots, s-1 \} \setminus I$.
Our congruence \eqref{MainCong} becomes
\begin{equation*}
 \sum_{i \in I} a_i y_i^k + \sum_{j \notin I} a_j (x_j/p^m)^k   \equiv 0 \mmod p^{r-mk},
\end{equation*}
with the $y_i \mmod p^{r-m}$ not divisible by $p$, the $x_j \mmod p^r$ divisible by $p^{m+1}$ for $j \in J$, and $x_s \mmod p^r$ divisible by $p^m$. Note that the second summation above is divisible by $p$, since $a_s$ is divisible by $p$. In analogy with the computation of $T_a(p)$, we deduce that
\begin{equation*} T_b(p) = \frac{p(\tilde A -1)}{p^{s-1}-1}, \end{equation*}
where $\tilde A$ is the number of solutions $w_1,\ldots, w_{s-1}$ modulo $p$ to the congruence
\begin{equation*} \sum_{i=1}^{s-1} a_i w_i^k \equiv 0 \mmod p.\end{equation*}
Note that $\tilde A$ and hence $T_b(p)$ is easily computable in any specific instance. 

The integer $T_b(p) \cdot (p^{s-1}-1)$ is divisible by $p$, while $T_a(p) \cdot (p^{s-1}-1)$ is not divisible by $p$, since the Chevalley-Warning theorem (see \cite{Ax1964}) yields $p | A$. In particular, the hypotheses of Lemma \ref{goal} are satisfied with $\kappa = (p^{s-1}-1)^{-1}$. This completes the proof of Theorem \ref{PosVar}.

\bibliographystyle{amsbracket}
\providecommand{\bysame}{\leavevmode\hbox to3em{\hrulefill}\thinspace}

\end{document}